\documentclass[11pt]{amsart}
\input{luatex-pdf}
\usepackage{epsfig, amssymb, amsfonts, color, mathrsfs, graphicx}
\usepackage[all]{xy}
\usepackage[margin=1.5in]{geometry}

\usepackage{amscd,amssymb,verbatim}
\usepackage{enumerate}
\usepackage{dtk-logos}
\usepackage{mathrsfs}
\usepackage[T1]{fontenc}

\newtheorem{lemma}{Lemma}[section]
\newtheorem{prop}[lemma]{Proposition}
\newtheorem{thm}[lemma]{Theorem}

\newtheorem{cor}[lemma]{Corollary}
\newtheorem{defn}[lemma]{Definition}
\newtheorem{lem}[lemma]{Lemma}
\newtheorem{rem}[lemma]{Remark}

\newtheorem{axiom}[lemma]{Axiom}
\newtheorem{example}[lemma]{Example}

\newtheorem*{special theorem}{My Specially-Named Theorem}

\newcommand{\N} { {\mathbb N} }
\newcommand{\Z} { {\mathbb Z} }
\newcommand{\Q} { {\mathbb Q} }

\newcommand{\A} { {\mathbb A} }

\newcommand{\C} { {\mathbb C} }

\newcommand{\DM} { {\mathrm{DM}} }
\newcommand{\Sh} { {\mathrm{Sh}} }

\newcommand{\M} { {\mathcal M} }
\newcommand{\Gal} { {\mathrm{Gal}} }



\renewcommand{\P}{\mathbf{P}}

\newcommand{\sA}{\mathcal{A}}

\newcommand{\sT}{\mathcal{T}}

\newcommand{\sX}{\mathcal{X}}
\newcommand{\sB}{\mathcal{B}}

\newcommand{\gm}{{\operatorname{gm}}}
\newcommand{\eff}{{\operatorname{eff}}}
\newcommand{\alg}{{\operatorname{alg}}}

\newcommand{\Spec}{\operatorname{Spec}}

\newcommand{\Cor}{\operatorname{Cor}}

\newcommand{\Coker}{\operatorname{Coker}}

\newcommand{\Hom}{\operatorname{Hom}}

\newcommand{\HI}{\operatorname{HI}}

\newcommand{\Sm}{\operatorname{Sm}}
\newcommand{\Sch}{\operatorname{Sch}}
\renewcommand{\epsilon}{\varepsilon}

\newcommand{\et}{{\operatorname{\acute{e}t}}}
\newcommand{\by}{\xrightarrow}



\newcommand{\skipline}{\vspace{12pt}}

\title{The algebraic part of motivic cohomology}
\date{}
\author{Tohru Kohrita}
\email[Tohru Kohrita]{tohru.kohrita@fu-berlin.de}
\email[Bruno Kahn]{bruno.kahn@imj-prg.fr}

\begin{document}

\maketitle

\hfill With an appendix by Bruno Kahn

\begin{abstract}
Motivated by Murre's work on universal regular homomorphisms on Chow groups in codimension $2,$ we generalize the algebraic equivalence relation and regular homomorphisms to the context of Voevodsky motives over a field. In the Nisnevich topology, we prove the existence of {\em universal} regular homomorphisms for a certain class of motivic cohomology groups, recovering Murre's theorem and the existence of Picard and Albanese varieties as special cases. This class also includes interesting cases such as higher Chow groups and Milnor $K$-groups. The appendix by Kahn proves that, for \'etale motives, universal regular homomorphisms exist for all geometric motives and compares them with those in the Nisnevich topology when both exist.
\end{abstract}


\section{Introduction}\label{section: introduction}
Let $\DM_{Nis}^{eff}(k)$ be the triangulated category of effective Voevodsky motives with integral coefficients over a perfect field $k$ in the sense of \cite[Definition 11.1.1]{Cisinski-Deglise}. It is a closed tensor category and contains the bounded above version $\DM^{eff}_{-}(k)$ of the derived category of motives defined by Voevodsky (\cite{VSF5}; also studied in \cite{MVW}) as a full tensor triangulated subcategory (\cite[Example 11.1.3]{Cisinski-Deglise}). The internal hom in $\DM_{Nis}^{eff}(k)$ is denoted by $\underline{\Hom}.$ Let us review the necessary terminology to explain the content of this paper.

Let $\Sch/k$ be the category of separated schemes of finite type over $k$ and let $(\Sch/k)^{prop}$ be the category with the same objects but only with proper morphisms. $\DM_{Nis}^{eff}(k)$ is equipped with monoidal functors $M\colon \Sch/k\longrightarrow \DM_{Nis}^{eff}(k)$ and $M^c\colon (\Sch/k)^{prop}\longrightarrow \DM_{Nis}^{eff}(k).$ For $X\in \Sch/k,$ $M(X)$ (resp., $M^c(X)$) is by definition the Nisnevich sheaf with transfers $\Z_{tr}(X):=\Cor_k(-,X)$ (resp., $z_{equi}(X,0)$) viewed as a complex concentrated in degree $0.$ Here, $\Cor_k$ stands for the group of finite correspondences in \cite[Definition 1.1]{MVW} and $z_{equi}(X,0)$ is the sheaf defined in \cite[Definition 16.1]{MVW}. Note that $\Z_{tr}(X)$ (resp., $z_{equi}(X,0)$) is indeed a Nisnevich sheaf with transfers and functorial for morphisms in $\Sch/k$ (resp., $(\Sch/k)^{prop}$) (\cite[4.1]{VSF5}; see also \cite[Corollary 3.6.3]{VSF2}). The object $M(X)$ (resp., $M^c(X)$) is called the motive of $X$ (resp., with compact supports). By definition, we have $M(X)=M^c(X)$ if $X$ is proper. The motive $\Z:=M(\Spec k)$ of the base field serves as the monoidal unit. 

For non-negative integers $n,$ $\Z(n)$ denotes the Suslin-Voevodsky motivic complex (\cite[Definition 3.1]{MVW}). It is a complex of Nisnevich sheaves with transfers and hence is an object in $\DM_{Nis}^{eff}(k).$ For $A\in \DM_{Nis}^{eff}(k),$ we write $A(n):=A\otimes \Z(n).$

In this article we define an algebraic equivalence relation on the group $\Gamma(A):=\Hom_{\DM_{Nis}^{eff}(k)}(\Z,A)$ for an arbitrary motive $A\in\DM_{Nis}^{eff}(k)$ (Definition~\ref{defn: algebraic part}). The subgroup $\Gamma_{alg}(A)$ consisting of elements trivial with respect to this equivalence relation is called the algebraic part of $\Gamma(A).$ If $k$ is algebraically closed, $\Gamma_{alg}(A)$ turns out to be divisible (Proposition~\ref{prop: algebraic part is divisible}). This leads us to consider the representability of $\Gamma_{alg}(A)$ by semi-abelian varieties. We generalize Samuel's idea of regular homomorphisms and consider the problem whether universal regular homomorphisms exist; see Definition~\ref{defn: regular homomorphism}.

Before stating the existence theorem, notice that if $A=M(X)$ ($X\in\Sch/k$), the group $\Gamma(A)$ is canonically isomorphic to the Suslin homology $H_0(X,\Z)$ (\cite[Proposition 14.18]{MVW}). In this case, $\Gamma_{alg}(A)$ often agrees with the degree zero part of $H_0(X,\Z)$ (Proposition~\ref{prop: algebraic part of zero cycles}, Remark~\ref{rem: counterexample}). If $A=\underline{\Hom}(M(X),\Z(r)[2r])$ ($X\in\Sm/k, r\in\Z_{\geq0}$), there is a natural isomorphism between $\Gamma(A)$ and the Chow group $CH^r(X)$ (\cite[Theorem 19.1]{MVW}). In this case (Proposition~\ref{prop: comparison with the classical algebraic part}), the algebraic equivalence on $\Gamma(A)$ agrees with Weil-Fulton's algebraic equivalence for Chow groups (\cite{Weil, Fulton}; see Subsection~\ref{section: The case of Chow groups}). 

Universal regular homomorphisms exist in the following case. 

\begin{thm}[{Corollary~\ref{cor: existence in dim zero} and Theorem~\ref{thm: existence main}}]\label{thm: intro 1}
Assume that the base field $k$ is algebraically closed. Let $X\in\Sch/k.$ Let $m$ and $n$ be integers such that $m\leq n+2$ and $n\geq0.$ Then, there exist universal regular homomorphisms for $\Gamma_{alg}(M(X))$ and $\Gamma_{alg}(\underline{\Hom}(M^c(X),\Z(n)[m])).$
\end{thm}

The claim for $\Gamma_{alg}(M(X))$ is a direct consequence of the existence of Serre-Ramachandran's Albanese schemes proved in \cite{Ramachandran}. If $X$ is connected smooth and proper, the case $(m,n)=(2,1)$ is known as the theory of Picard varieties (\cite[Proposition 9.5.10]{Kleiman}), and the case $(m,n)=(4,2)$ is a theorem of Murre (\cite[Theorem A]{Murre}). Just like Murre's proof uses Merkurjev-Suslin's norm residue isomorphism theorem in degree two (\cite{Merkurjev-Suslin}), the proof of Theorem~\ref{thm: intro 1} uses the Beilinson-Lichtenbaum conjecture, which is now a theorem (\cite[Introduction, Theorem C]{HW-norm-residue}) via the \emph{full} norm residue isomorphism theorem (\cite{Haesemeyer-Weibel, Voevodsky mod l, Weibel}). The restriction on the indices $(m,n)$ in the theorem is imposed by the use of this result. The appendix by Kahn proves the existence of universal regular homomorphisms for all {\em \'etale} geometric motives and compares them with those in the Nisnevich topology when both exist.

In the final subsection (Subsection~\ref{subsection: rationality}), we consider the rationality of universal regular homomorphisms. Here we closely follow the argument in \cite[Theorem 4.4]{ACMV}.

It may be worth mentioning one technical point. In the usual theory of algebraic equivalence for smooth proper schemes, it is a key property that two algebraic cycles are algebraically equivalent if and only if they appear in the same family of cycles parametrized by an abelian variety (\cite[Lemma 9]{Weil}). An analogous statement with semi-abelian varieties (Proposition~\ref{prop: parametrization by semiabelian varieties}) plays a similar role in this paper, and this property, as the referee pointed out and outlined the proof, follows via Lemma~\ref{lem: axiom} from Voevodsky's calculation of the motives of smooth \emph{affine} curves.

The referee's contribution is significant in this paper. Some explicitly appear in the text, but others got hidden in the revision process. The contributions of the latter nature include the following. (1) The author originally worked exclusively with motivic cohomology with compact supports. It was the referee who explained how to develop the axiomatic framework as in Section 2, and then apply it to the case of motives. Most importantly, Definition~\ref{defn: algebraic part} and Axiom~\ref{axiom} are due to the referee. These simplified the author's previous arguments, particularly the proofs of Propositions~\ref{prop: algebraic part} and \ref{prop: parametrization by semiabelian varieties}, and unified cases that had been treated separately. (2) The author originally worked only over an algebraically closed base field. It was the referee who suggested working over a perfect (sometimes even non-perfect) field whenever possible and explained how to do it. This turned out fruitful in the study of algebraic part, and we obtained the current version of Propositions~\ref{prop: comparison with the classical algebraic part} and \ref{prop: algebraic part of zero cycles}. The use of the symmetric power construction and \cite[Theorem 3.13]{MilneJV} in the proof of Proposition~\ref{prop: algebraic part of zero cycles} is also due to the referee. Finally, Subsection~\ref{subsection: rationality} would have been nonexistent, had it not been for the referee's explicit question on the topic. The referee also directed the author to the paper \cite{ACMV}. For this subsection, we would also like to thank Charles Vial for pointing out independently that the method in {\em ibid.} is applicable. (3) The definition of unpointed regular homomorphisms is due to the referee (Definition~\ref{defn: unpointed regular homomorphism}). Subsection~\ref{section: unpointed regular homomorphism} should be attributed to him or her. The idea of unpointed regular homomorphisms enables us to state the relation to Ayoub and Barbieri-Viale's work (Remark~\ref{rem: relation to ABV}). It also made the comparison with Serre-Ramachandran's Albanese varieties simpler and conceptual (Subsection~\ref{subsection: Relation with Ramachandran's Albanese schemes}).

\skipline
{\it Convention.} From now on, schemes are assumed separated and of finite type over a base field $k,$ and morphisms of schemes are those over the base field except in  Subsection~\ref{subsection: rationality}. Group schemes are also assumed to be defined over $k.$ 

$\Sch/k$ denotes the category of schemes (separated and of finite type over $k$) and $\Sm/k$ the full subcategory of smooth schemes.

A scheme $X$ pointed at $x$ means a pair of a scheme $X$ and a rational point $x\in X(k).$ A group scheme is considered pointed at the unit $0$ unless 
otherwise noted. A curve means a connected scheme of pure dimension one. 

\skipline
{\it Acknowledgements.} This paper is based on the author's dissertation. The author wishes to thank his advisors Thomas Geisser and Hiroshi Saito for their constant advice and encouragement. This paper is a result of Thomas Geisser's suggestion to consider a motivic version of Murre and H. Saito's work on regular homomorphisms. As mentioned above, the referee's contribution was enormous. We would like to thank him or her for the careful reading and insightful suggestions. In the very first version of this work, Theorem~\ref{thm: intro 1} was stated under resolution of singularities. We thank Shuji Saito for pointing out that the assumption was unnecessary. We also thank Bruno Kahn for helpful discussions and writing an appendix. We would also like to thank Federico Binda, Ryo Horiuchi, Shane Kelly, Amalendu Krishna, Takashi Maruyama, Hiroyasu Miyazaki and Rin Sugiyama for helpful conversations.


\section{Universal regular homomorphisms}\label{chapter: Algebraic representatives}

Although the interest of this paper is motivic cohomology, the formalism of algebraic part and regular homomorphisms can be developed in a categorical setting. The regular homomorphism in this paper is a generalization of what Samuel called a rational homomorphism in the context of Chow groups (\cite[Section 2.5]{Samuel}; see also \cite[Definition 1.6.1]{Murre}). To motivate our discussion, let us review the classical case of Chow groups. 


\subsection{The case of Chow groups}\label{section: The case of Chow groups}

Let $X$ be a scheme in $\Sch/k.$ The quotient $CH^r(X):=Z^r(X)/_{\sim_\mathrm{rat}}$ of the free abelian group $Z^r(X)$ generated by the set of cycles of codimension $r$ on $X$ modulo rational equivalence (\cite[Sections 1.3 and 1.6]{Fulton}) is called the Chow group of $X$ in codimension $r.$ There is another coarser equivalence relation on $Z^r(X)$ called algebraic equivalence (\cite[Definition 10.3]{Fulton}). The cycles algebraically equivalent to zero form a subgroup of $Z^r(X),$ and its image $A^r(X)$ in $CH^r(X)$ is called the algebraic part of $CH^r(X).$ By definition, we have the equality
\[A^r(X)=\bigcup_{\substack{T\in \Sm/k,~{\text{connected}}\\ t_0,t_1\in T(k)}}\mathrm{im}\{CH^r(T\times X)\buildrel (\cdot)_{t_1}-(\cdot)_{t_0}\over\longrightarrow CH^r(X)\},\]
where the map sends a cycle $Y\in CH^r(T\times X)$ to the difference of refined Gysin
pullbacks $Y_{t_1}-Y_{t_0}\in CH^r(X)$ (\cite[Section 6.2]{Fulton}).
If $k$ is perfect, we may assume $T$ to be smooth and proper (in fact, even a smooth projective curve) by \cite[Proposition 3.14]{ACMVparameter}.
The perfectness of $k$ is needed here basically because regular schemes are not necessarily smooth over non-perfect fields. (See \cite[Lemma 3.8]{ACMVparameter} for this point.)

At least when $k$ is algebraically closed and $X$ is smooth and projective (this is the case classically considered), we can relate $A^r(X)$ to abelian varieties by considering regular homomorphisms:

\begin{defn}[{\cite[Section 2.5]{Samuel}}; see also {\cite[Section 4]{Hartshorne}}, {\cite{KleimanICM}} and {\cite[Definition 1.6.1]{Murre}}]\label{defn: classical regular homomorphism}
Suppose $k$ is algebraically closed. Let $X$ be a connected smooth projective scheme and
let $A$ be an abelian variety. A group homomorphism $\phi\colon A^r(X)\longrightarrow A(k)$ is called {\bf regular} if, for any smooth projective connected scheme $T$ pointed at $t_0$ and for any cycle $Y\in CH^r(T\times X),$ the composition 
\[T(k)\buildrel w_Y\over\longrightarrow A^r(X)\buildrel\phi\over\longrightarrow A(k),\]
where $w_Y$ maps $t\in T(k)$ to $Y_t-Y_{t_0},$ is induced by a scheme morphism $T\longrightarrow A.$
\end{defn}

A regular homomorphism $\phi\colon A^r(X)\longrightarrow A(k)$ is said to be {\bf universal} (\cite[Section 2.5, Remarque (2)]{Samuel}) if for any regular homomorphism $\phi'\colon A^r(X)\longrightarrow A'(k),$ there is a unique homomorphism of abelian varieties $h\colon A\longrightarrow A'$ such that $h(k)\circ\phi=\phi'.$ Universal regular homomorphisms are known to exist in codimension $r=1,2$ and $\dim X$ (\cite[Section 2.5, Remarque (2)]{Samuel}, \cite[Section 1.8 and Theorem A]{Murre}). 

For $r=1,$ it is given by the isomorphism 
\[ w_{\mathcal P}^{-1}\colon A^1(X)\buildrel\cong\over\longrightarrow Pic_{X}^0(k),\]
where $Pic_X^0$ is the Picard variety of $X$ and $\mathcal P\in CH^1(Pic_{X}^0\times X)$ is the divisor corresponding to the Poincar\'e bundle on $Pic_{X}^0\times X.$

The case $r=\dim X$ coincides with the Albanese map
\[alb_X\colon A^{\dim X}(X)\longrightarrow Alb_X(k),\]
i.e. the map that sends $\sum_i n_i\cdot x_i\in A^{\dim X}(X)$ to $\sum_i n_i a_p(x_i)\in  Alb_X(k),$ where $a_p\colon X\longrightarrow Alb_X$ is the canonical map that sends $p\in X(k)$ to $0\in Alb_X(k).$ As $a_p=a_q+a_p(q)$ for any $p$ and $q\in X(k)$ by the universality of Albanese varieties, the Albanese map $alb_X$ is independent of the choice of $p.$

The existence in codimension $r=2$ was proved by Murre (\cite[Theorem A]{Murre}). The proof is based on the works of Merkurjev-Suslin \cite{Merkurjev-Suslin}, Bloch \cite{Bloch, Bloch 2}, Bloch-Ogus \cite{Bloch-Ogus}, Saito \cite{Saito} and Serre \cite{Serre}. If $k=\C,$ a universal regular homomorphism in codimension $2$ can be described in terms of Griffiths's Abel-Jacobi map (see \cite{LiebermanMotive}), which is an analytically defined map
\[AJ\colon CH^2_{hom}(X)\longrightarrow J^2(X)\]
from $CH_{hom}^2(X),$ the kernel of the Betti cycle map, to Griffiths's intermediate Jacobian $ J^2(X).$ Murre (\cite[Theorem C]{Murre}) showed that the restriction of $AJ$ to the algebraic part
\[AJ|_{A^2(X)}\colon A^2(X)\longrightarrow AJ(A^2(X))\]
is universal regular. (In particular, the image of $A^2(X)$ under $AJ$ is the group of rational points of an abelian variety.)

Universal regular homomorphisms have the following properties.

\begin{thm}\label{thm: classical Rojtman-type}
Let $k$ and $X$ be as in Definition~\ref{defn: classical regular homomorphism}. For $r= 1,2$ or $\dim X,$ let $\phi_X^r$ denote the universal regular homomorphism for $A^r(X).$ Then, $\phi_X^1$ is an isomorphism and $\phi_X^{\dim X}$ is an isomorphism on torsion. If $k=\C,$ $\phi_X^2$ is also an isomorphism on torsion.
\end{thm}

\begin{proof}
The case $r=1$ is the theory of Picard varieties (see \cite[Proposition 9.5.10]{Kleiman}). 
The case $r=\dim X$ is known as Rojtman's theorem (\cite{Rojtman, Bloch, Milne}). The final codimension $2$ case is due to Murre (\cite[Theorem C]{Murre}). While the surjectivity is immediate from the construction of $\phi_X^2,$ the injectivity is proved by relating the torsion part of the target of $\phi_X^2$ (which sits in Griffiths's intermediate Jacobian as explained above) to a certain \'etale cohomology group by Artin's comparison theorem 
(\cite[Expos\'e XI, Th\'eor\`eme 4.4]{SGA4}) and then using Merkurjev-Suslin's norm residue isomorphism theorem in degree two (\cite{Merkurjev-Suslin}).
\end{proof}


\subsection{Algebraic part}\label{section: The algebraic part}

Consider a preadditive category $\mathcal A$ equipped with a functor
\[\M\colon \Sm/k\longrightarrow \mathcal A\]
from the category of smooth schemes.
By a preadditive category, we mean a category in which all hom sets are abelian groups and the composition of morphisms is bilinear. What we have in mind is the case where $\mathcal A$ is Voevodsky's triangulated category of motives and $\M$ is the canonical functor which associates with schemes their motives. For affine schemes, we write $\M(R)$ for $\M(\Spec R).$ We put $\Gamma(A):=\Hom_{\mathcal A}(\M(k), A)$ for $A\in \mathcal A.$

For $T\in \Sm/k$ with a rational point $t\in T(k)=\Hom_{\Sm/k}(\Spec k, T),$ we define a map $t^*\colon \Hom_\mathcal A(\M(T), A)\longrightarrow\Gamma(A)$ by sending $Y\in \Hom_\mathcal A(\M(T), A)$ to $Y\circ \M(t)\in \Gamma (A).$

\begin{defn}\label{defn: algebraic part}
Let $\mathfrak T$ be a class of connected quasi-projective schemes in $\Sm/k.$ The subset $\Gamma_\mathfrak T(A)$ of the group $\Gamma(A)$ is defined as
\[\Gamma_\mathfrak T(A):=\bigcup_{\substack{T\in\mathfrak T \\ t_0,t_1\in T(k)}}\mathrm{im}\{\Hom_\mathcal A(\M(T),A)\buildrel t_1^*-t_0^*\over\longrightarrow \Gamma(A)\}.\] 

If $\mathfrak T= alg := \{T\in\Sm/k | \text{ $T$ is connected and quasi-projective.}\},$ $\Gamma_{alg}(A)$ is called the {\bf algebraic part} of $\Gamma(A).$
\end{defn}

\begin{prop}\label{prop: algebraic part}
If $\mathfrak T$ is closed under product, then $\Gamma_{\mathfrak T}(A)$ is a subgroup of $\Gamma(A).$
\end{prop}

\begin{proof}
Since $\Gamma_\mathfrak T(A)$ is clearly closed under taking inverses, it suffices to show that it is closed
under addition. Let $x$ and $x'$ be elements in $\Gamma_\mathfrak T(A).$ Then, for 
some $T\in \mathfrak T,$ $t_0,t_1\in T(k)$ and $Y\in \Hom_{\mathcal A}(\M(T),A)$ we have $x=(t_1^*-t_0^*)(Y).$
Similarly, $x'=(t_1'^*-t_0'^*)(Y')$ holds for some $T'\in\mathfrak T,$ $t_0',t_1'\in T'(k)$ and $Y'\in \Hom_{\mathcal A}(\M(T'),A).$
Now, $x+x'$ is the image of $Y\circ \M(p)+Y'\circ \M(p')$ under
\[(t_1\times t'_1)^*-(t_0\times t'_0)^*\colon Hom_\mathcal A(\M(T\times T'),A)\longrightarrow \Gamma(A),\]
where $p$ (resp., $p'$) is the projection of $T\times T'$ to $T$ (resp., $T'$). Thus, $x+x'$ belongs to $\Gamma_\mathfrak T(A).$
\end{proof}

Notice that the assignment $\mathcal A\ni A\mapsto \Gamma_{\mathfrak T}(A)\in Ab$ is functorial.

Different choices of $\mathfrak T$ may give rise to the same $\Gamma_\mathfrak T(A).$ Crucial for the sequel of this paper is the observation (Proposition~\ref{prop: parametrization by semiabelian varieties}) that the class of semi-abelian varieties is enough to define the algebraic part under Axiom~\ref{axiom} below, which concerns \emph{affine} curves. To state the axiom, we need the notion of multiplicative morphisms. We thank the referee for pointing out a gap in the author's previous proof in Proposition~\ref{prop: an algebraic representative is surjective} and explaining how to fix it by introducing multiplicative morphisms. Axiom~\ref{axiom}, the definition and the use of multiplicative morphisms are due to the referee. The said gap already existed in \cite{Murre}, from which we borrowed many arguments. This point is elaborated in \cite{Kahn-patch}.

\begin{defn}\label{defn: multiplicative}
Let $G$ be a group scheme. Let $A$ be an object in $\mathcal A.$ A morphism $Y\in\Hom_{\mathcal A}(\M(G), A)$ is called {\bf multiplicative} if 
\[Y\circ (\M(p_1)+\M(p_2)-\M(m)-\M(0))=0,\]
where $p_1,p_2,m,0\colon G\times G\longrightarrow G$ are respectively the first projection, the second projection, the multiplication and the zero morphism.
\end{defn}

\begin{axiom}\label{axiom}
Let $C$ be an arbitrary smooth connected affine curve pointed at $c\in C(k).$ Let $Alb_C^0$ be the Serre Albanese variety of $C$ and let $\iota_{c}\colon C\longrightarrow Alb_C^0$ be the canonical morphism that sends $c$ to $0.$ Then, the morphism $\M(\iota_{c})\colon \M(C)\longrightarrow \M(Alb_C^0)$ is a split monomorphism and has a retraction $\rho\colon \M(Alb_C^0)\longrightarrow \M(C)$ that is multiplicative.
\end{axiom}

For the existence of Serre's Albanese varieties over non-perfect fields, we refer the reader to \cite[Theorem A.1]{Wittenberg}.

\begin{rem}
In Lemma~\ref{lem: axiom}, we show that the canonical functor $\Sm/k\longrightarrow \DM_{Nis}^{eff}(k)$ satisfies Axiom~\ref{axiom}.
\end{rem}

Let us quote the following geometric lemma from \cite{ACMVparameter}, which is based on the earlier works of Mumford \cite[Chapter II, Section 6, Lemma]{Mumford}, Jouanolou \cite{Jouanolou}, Poonen \cite{Poonen}, and Charles and Poonen \cite{Charles-Poonen}. To be clear, we repeat our blanket assumptions on schemes in this quote. 

\begin{lem}[{\cite[Theorem A.1 (4)]{ACMVparameter}}]\label{lem: Bertini quote}
Let $X$ be an irreducible separated scheme of dimension $\geq 1$ of finite type over a field $k,$ and let $\bar x_1,\cdots, \bar x_n\in X(\bar k)$ be $\bar k$-points of $X,$ where $\bar k$ is an algebraic closure of $k.$ Then, there is a closed subscheme $C$ of $X$ which is an integral curve over $k$ that contains the images of $\bar x_1,\cdots, \bar x_n.$ Moreover, if $X$ is quasi-projective and smooth over $k,$ there exists such a curve $C$ which is smooth over $k.$
\end{lem}

The above lemma will be used in the following form. This is motivated by {\cite[Proposition 3.10]{ACMVparameter}}, \cite[Example 10.3.2]{Fulton} and \cite[Lemma 9]{Weil}.

\begin{lem}\label{lem: curves are enough}
Let cur be the class of smooth curves. For any $A\in\mathcal A,$ the equality $\Gamma_{alg}(A)=\Gamma_{cur}(A)$ holds.
\end{lem}

\begin{proof}
We need to show the non-trivial inclusion $\Gamma_{alg}(A)\subset \Gamma_{cur}(A).$ Let $x\in \Gamma_{alg}(A).$ There exist $T\in alg,$ $t_0,t_1\in T(k)$ 
and $Y\in \Hom_{\mathcal A}(\M(T), A)$ such that $x=(t_1^*-t_0^*)(Y).$ By Lemma~\ref{lem: Bertini quote}, there is an integral smooth $1$-dimensional subscheme $C$ of $T$ through which $t_0$ and $t_1$ factor. Let $i\colon C\to T$ denote the inclusion and let $t_i'\colon \Spec k\longrightarrow C$ be the morphism with $t_i=i\circ t_i'$ ($i=0,1$). We then have $x=(t_1'^*-t_0'^*)(Y\circ \M(i)).$ Thus, $x\in \Gamma_{cur}(A).$
\end{proof}

Let $\mathfrak G$ be a class of smooth connected group schemes. Since smooth group schemes are quasi-projective by \cite{Chow} (see also \cite[Corollary 1.2]{Conrad}), $\mathfrak G$ satisfies the condition for $\mathfrak T$ in Definition~\ref{defn: algebraic part}. For $G\in\mathfrak G$ and $A\in \mathcal A,$ we define $\Hom_\mathcal A(\M(G),A)_{mult}$ as the subset of $\Hom_\mathcal A(\M(G),A)$ consisting of multiplicative morphisms. We put 
\[\Gamma_{\mathfrak G}^{mult}(A):=\bigcup_{\substack{G\in\mathfrak G \\ g_0,g_1\in G(k)}}\mathrm{im}\{\Hom_\mathcal A(\M(G),A)_{mult}\buildrel g_1^*-g_0^*\over\longrightarrow \Gamma(A)\}.\]

The first consequence of Lemma~\ref{lem: curves are enough} is the following. It is motivated by \cite[Lemma 9]{Weil} and \cite[p.60, Theorem 1]{Lang}.

\begin{prop}\label{prop: parametrization by semiabelian varieties}
Assume Axiom~\ref{axiom}. Let $\mathfrak G$ be any class of smooth connected group schemes that contains the class $\mathfrak J$ of Albanese varieties of pointed smooth affine curves.
Then, for any $A \in \mathcal A,$ the equalities $\Gamma_{alg}(A)=\Gamma_{\mathfrak G}(A)=\Gamma_{\mathfrak G}^{mult}(A)$ hold.
\end{prop}

\begin{proof}
It is enough to show the inclusion $\Gamma_{alg}(A)\subset \Gamma_\mathfrak J^{mult}(A)$ because we trivially have $\Gamma_{alg}(A)\supset\Gamma_{\mathfrak G}(A)\supset\Gamma_{\mathfrak G}^{mult}(A)\supset\Gamma_\mathfrak J^{mult}(A).$ By Lemma~\ref{lem: curves are enough}, it suffices to show
$\Gamma_{cur}(A)\subset \Gamma_{\mathfrak J}^{mult}(A).$

First observe that $\Gamma_{cur}(A)=\Gamma_{\mathit{aff.cur}}(A),$ where $\mathit{aff.cur}$ is the class of smooth affine curves. 
To see the inclusion $\Gamma_{cur}(A)\subset\Gamma_{\mathit{aff.cur}}(A),$ let $x\in\Gamma_{cur}(A).$ Then, there is a smooth curve $C,$ $c_0,c_1\in C(k)$ and
$Y\in \Hom_\mathcal A(\M(C),A)$ such that $x=(c_1^*-c_0^*)(Y).$ If $C$ is not affine, choose an open affine subscheme $U\buildrel i\over\hookrightarrow C$ that contains the images of $c_0$ and $c_1.$ We then have $x=(c_1'^*-c_0'^*)(Y\circ \M(i)),$ where $c_i'\in U(k)$ is the morphism such that $i\circ c_i' = c_i$ ($i=0,1$). Thus, $x\in\Gamma_{\mathit{aff.cur}}(A).$

Now, we need to show $\Gamma_{\mathit{aff.cur}}(A)\subset\Gamma_\mathfrak J^{mult}(A).$ Let $x$ be an element in $\Gamma_{\mathit{aff.cur}}(A).$ Then, there is a smooth affine curve $C,$ rational points $c_0, c_1\in C(k)$ and a morphism $Y\in \Hom_{\mathcal A}(\M(C),A)$ such that $x=(c_1^*-c_0^*)(Y).$ Let $\iota\colon C\longrightarrow Alb_C^0$ be a canonical morphism that is determined once we choose a rational point of $C.$ Let $\rho\colon \M(Alb_C^0)\longrightarrow \M(C)$ be a multiplicative retraction of $\M(\iota),$ which exists under Axiom~\ref{axiom}.
Then, we have $x=((\iota\circ c_1)^*-(\iota\circ c_2)^*)(Y\circ\rho).$
Since $\rho$ is multiplicative, $Y\circ\rho$ is also multiplicative. Therefore, $x\in \Gamma_\mathfrak J^{mult}(A).$
\end{proof}


\subsection {Regular homomorphisms}\label{section: regular homomorphism}

The classical definition of regular homomorphisms (Definition~\ref{defn: classical regular homomorphism}) can be readily generalized to our setting. {\em In the rest of Section~\ref{chapter: Algebraic representatives}, we assume that the base field $k$ is algebraically closed.} We continue using the notation in the previous section.

\begin{defn}\label{defn: regular homomorphism}
Let $A\in \mathcal A$ and let $S$ be a semi-abelian variety.
A group homomorphism $\phi\colon \Gamma_{alg}(A)\longrightarrow S(k)$ is called {\bf regular} if for any $T\in alg,$ 
any rational point $t_0\in T(k)$ and for any $Y\in \Hom_{\mathcal A}(\M(T), A),$ the composition
\[T(k)\buildrel w_Y\over\longrightarrow \Gamma_{alg}(A)\buildrel\phi\over\longrightarrow S(k)\]
is induced by a scheme morphism $f\colon T\longrightarrow S.$ Here, the map $w_Y$ sends
\[t\in T(k)=\Hom_{\Sm/k}(\Spec k,T)\]
to the morphism 
\[t^*(Y)-t_0^*(Y)\colon  \M(k)\buildrel\\M(t)-\M(t_0)\over\longrightarrow \M(T)\buildrel Y\over\longrightarrow A\]
in $\mathcal A.$

For a fixed $A\in \mathcal A,$ a regular homomorphism $\phi\colon\Gamma_{alg}(A)\longrightarrow S(k)$ is said to be {\bf universal} if for any regular homomorphism $\phi'\colon \Gamma_{alg}(A)\longrightarrow S'(k),$ there is a unique homomorphism of semi-abelian $k$-varieties $h\colon S\longrightarrow S'$ such that $h(k)\circ \phi=\phi'.$
\end{defn}

\begin{rem}\label{rem: uniqueness}
The morphism $f\colon T\longrightarrow S$ in the above definition is unique because $T$ is reduced and the set of $k$-points $T(k)$ is Zariski dense in $T.$
\end{rem}

\begin{rem}[Functoriality]\label{rem: functoriality}
Let $\Phi_i\colon \Gamma_{alg}(A_i)\longrightarrow S_i(k)$ ($i=1,2$) be universal regular homomorphisms. Suppose $f\colon A_1\longrightarrow A_2$ is a morphism in $\mathcal A.$ Then, it induces a homomorphism $f_*\colon\Gamma_{alg}(A_1)\longrightarrow \Gamma_{alg}(A_2).$ By the universality, we obtain a unique homomorphism $h\colon S_1\longrightarrow S_2$ of semi-abelian varieties with $h(k)\circ \Phi_1 = \Phi_2\circ f_*.$
\end{rem}

\begin{prop}\label{rem: surjection from a semiabelian variety}
Let $A\in \mathcal A.$ Given a regular homomorphism 
\[\phi\colon\Gamma_{alg}(A)\longrightarrow S(k),\]
there is a semi-abelian variety $S_0$ and a multiplicative morphism $Y_0\in \Hom_\mathcal A(\M(S_0),A)_{mult}$ such that $\mathrm{im}(\phi\circ w_{Y_0})=\mathrm{im}(\phi).$ (
abelian variety is considered to be pointed at the unit.) In particular, there is a semi-abelian subvariety $S'$ of $S$ such that the image of $\phi$ agrees with the group of rational points $S'(k).$
\end{prop}

\begin{proof}
We follow the method of \cite[Proof of Lemma 1.6.2 (i)]{Murre}. Consider the diagram
\[S'(k)\buildrel w_{Y'}\over\longrightarrow \Gamma_{alg}(A) \buildrel\phi\over\longrightarrow S(k)\]
where $S'$ is a semi-abelian variety and $Y'\in \Hom_{\mathcal A}(\M(S'),A).$
Since the composition is induced by a morphism of schemes that sends $0$ of $S'$ to $0$ of $S,$ it is induced by a
homomorphism of semi-abelian varieties by \cite[Theorem 3]{Rosenlicht} (alternatively, see \cite[Theorem 2]{Iitaka} or more recent \cite[Lemma 4.1]{Kahn}; we thank the referee for these three references). Hence, there is a semi-abelian subvariety of $S$ whose group of rational points agrees with $\mathrm{im}(\phi\circ w_{Y'}).$

Choose $S_0$ and multiplicative $Y_0$ such that the dimension of $\mathrm{im}(\phi\circ w_{Y_0})$ is maximal among such diagrams. (By the dimension of $\mathrm{im}(\phi\circ w_{Y_0}),$ we mean the dimension of the underlying semi-abelian variety.)
We claim that they satisfy the desired property $\mathrm{im}(\phi\circ w_{Y_0})=\mathrm{im}(\phi).$

If this is not the case, there is an element $x\in \Gamma_{alg}(A)$ such that $\phi(x)\notin\mathrm{im}(\phi\circ w_{Y_0}).$ 
By Proposition~\ref{prop: parametrization by semiabelian varieties}, there is a semi-abelian variety $S_1$ and a multiplicative morphism 
$Y_1\in \Hom_{\mathcal A}(\M(S_1), A)_{mult}$ with $\phi(x)\in\mathrm{im}(\phi\circ w_{Y_1}).$

Now, let us put 
\[S_2:=S_0\times S_1\] and 
\[Y_2:=Y_0\circ \M(p_0)+Y_1\circ \M(p_1),\]
where $p_i\colon S_2\longrightarrow S_i$ ($i=0, 1$) is the projection. It is straightforward to check that $Y_2$ is multiplicative. 
Let $e\colon S_0(k)\longrightarrow S_2(k)=(S_0\times S_1)(k)$ be the map that sends 
$x\in S_0(k)$ to $(x,0)\in S_2(k).$ Then, we are given the commutative diagram
\begin{displaymath}
\xymatrix{S_0(k) \ar@/^1pc/[drr]^-{w_{Y_0}} \ar[dr]_-e\\
& S_2(k) \ar[r]_-{w_{Y_2}} & \Gamma_{alg}(A) \ar[r]_-\phi & S(k)}
\end{displaymath}   
Therefore, $\mathrm{im}(\phi\circ w_{Y_2})\supset\mathrm{im}(\phi\circ w_{Y_0}).$ 
Similarly, we have $\mathrm{im}(\phi\circ w_{Y_2})\supset\mathrm{im}(\phi\circ w_{Y_1}).$ 
Since $\phi(x)\in \mathrm{im}(\phi\circ w_{Y_1})$ but $\phi(x)\not\in \mathrm{im}(\phi\circ w_{Y_0})$ 
and $\mathrm{im}(\phi\circ w_{Y_2})$ and $\mathrm{im}(\phi\circ w_{Y_0})$ are the groups of rational points of semi-abelian subvarieties (in particular, closed and irreducible subvarieties) of $S,$ these inclusions imply that the 
dimension of $\mathrm{im}(\phi\circ w_{Y_2})$ is strictly greater than that of 
$\mathrm{im}(\phi\circ w_{Y_0}).$ This is a contradiction.
\end{proof}

\begin{prop}\label{prop: an algebraic representative is surjective}
Any universal regular homomorphism is surjective, and any surjective regular homomorphism is surjective on torsion, i.e. the induced homomorphism on the torsion part is surjective.
\end{prop}

\begin{proof}
The first assertion is immediate from Proposition~\ref{rem: surjection from a semiabelian variety}. For the second claim, let $\phi\colon \Gamma_{alg}(A)\longrightarrow S(k)$ be a surjective regular homomorphism. By Proposition~\ref{rem: surjection from a semiabelian variety}, there is a semi-abelian variety $S_0$ and a multiplicative morphism $Y_0\colon \M(S_0)\longrightarrow A$ such that the composition $\phi\circ w_{Y_0}$
\[S_0(k)\buildrel w_{Y_0}\over\longrightarrow \Gamma_{alg}(A)\buildrel\phi\over\longrightarrow S(k)\]
is surjective. Since the kernel of $\phi\circ w_{Y_0}$ is an extension of a finite group 
by a divisible group, we have $\mathrm{ker}(\phi\circ w_{Y_0})\otimes \Q/\Z=0.$ This implies that $\phi\circ w_{Y_0}$ is surjective on torsion. Now, since $Y_0$ is multiplicative, $w_{Y_0}$ is a group homomorphism. Hence, $\phi$ is also surjective on torsion.
\end{proof}


\subsection{The existence criterion of universal regular homomorphisms}\label{subsection: The existence criterion of universal regular homomorphisms}

We prove a criterion for the existence of universal regular homomorphisms (Proposition~\ref{prop: existence criterion}). This is a generalization of \cite[Theorem 2.2]{Saito} as presented in \cite[Proposition 2.1]{Murre} (cf. \cite[Th\'eor\`eme 2]{Serre}). Our proof is a combination of the methods of Serre \cite{Serre}, Saito \cite{Saito} and Murre \cite{Murre}. We thank the referee for pointing out that the argument used in the proofs of Lemma~\ref{lem: maximal homomorphism factorization} and Proposition~\ref{prop: existence criterion} also appeared in \cite[Chapter II, Section 3, p. 43--44]{Lang}. The following definition is due to Serre, but also implicit in {\em loc. cit.}

\begin{defn}[{\cite[D\'efinition 2]{Serre}}]\label{defn: maximal homomorphism}
Let $A\in \mathcal A.$ A regular homomorphism $\phi: \Gamma_{alg}(A)\longrightarrow S(k)$ is called a {\bf maximal homomorphism} if it is surjective and satisfies the following condition:

Given any factorization 
\begin{displaymath}
\xymatrix{& S'(k) \ar[d]^-{\pi(k)}\\
\Gamma_{alg}(A) \ar[r]_-\phi \ar[ur]^-{\phi'} & S(k),} 
\end{displaymath}
where $\phi'$ is regular and $\pi\colon S'\longrightarrow S$ is an isogeny, $\pi$ is in fact an isomorphism.
\end{defn}

\begin{lem}\label{lem: maximal homomorphism factorization}
Let $\phi\colon \Gamma_{alg}(A)\longrightarrow S(k)$ be a regular homomorphism. Then, there is a factorization 
\begin{displaymath}
\xymatrix{ \Gamma_{alg}(A) \ar[rr]^-\phi \ar[dr]_-g && S(k) \\
& S'(k) \ar[ur]_-{h(k)}}
\end{displaymath}
such that $g$ is a maximal homomorphism and $h\colon S'\longrightarrow S$ is a finite morphism.
\end{lem}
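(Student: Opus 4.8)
The plan is to build $S'$ as a quotient of $S$ by a suitable ``infinitely divisible'' subgroup, so that no further isogeny can be split off. First I would consider the collection of all isogenies $\pi\colon S''\to S$ through which $\phi$ factors as a regular homomorphism, i.e. all factorizations $\phi = \pi\circ\psi$ with $\psi\colon H_{c,alg}^m(X,\Z(n))\to S''(k)$ regular and $\pi$ an isogeny. The key observation is that the kernels of such isogenies, viewed inside $S$, are all contained in the torsion subgroup $S(k)_{tors}$, and that each $\ker\pi$ is a finite subgroup; the ``size'' of the image in $S$ of the divisible group $H_{c,alg}^m(X,\Z(n))$ (which is divisible by Proposition~\ref{prop: algebraic part is divisible}) under $\phi$ forces a bound on how large these kernels can be. So I would look at the subgroup $N\subseteq S(k)$ generated by all the $\ker\pi$ over all such factorizations and argue it is finite.

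The main step is to show this union (equivalently, the subgroup it generates) is actually finite, not merely a directed union of finite groups. Here I would use that $\phi$ is surjective after composing with some $w_{Y_0}$ coming from a semi-abelian variety $S_0$ (Proposition~\ref{rem: surjection from a semiabelian variety}): the composite $S_0(k)\to H_{c,alg}^m(X,\Z(n))\to S(k)$ is induced by a homomorphism of semi-abelian varieties $u\colon S_0\to S$ which is surjective (as $\phi$ is surjective, and its image is semi-abelian by Corollary~\ref{lem: the image is semiabelian}). An isogeny $\pi\colon S''\to S$ through which $\phi$ factors regularly must then be compatible with $u$: the lifted map $S_0(k)\to S''(k)$ is again induced by a homomorphism of semi-abelian varieties $u''\colon S_0\to S''$ with $\pi\circ u'' = u$, so $\ker\pi$ injects into $S_0/\ker u$ — more precisely $\ker\pi \subseteq u(\ker(S_0\xrightarrow{} S''))/\cdots$; in any case $\ker\pi$ is a quotient of a subgroup of $\ker u$ up to the fibered structure, hence its order is bounded by a constant independent of $\pi$ (one can bound it by $|\ker u|$ if $u$ has finite kernel, or reduce to that case by first replacing $S_0$). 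Therefore $N$ is finite.

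Having established $N$ is finite, I set $S' := S/N$ and let $h\colon S'(k)\to S(k)$... no: rather $S\to S'$ is the quotient isogeny, and I need the morphism the \emph{other} way. I would instead take $h\colon S'\to S$ to be \emph{an} isogeny realizing the ``largest'' splitting; concretely, since $N$ is a finite subgroup of $S(k)$, by the structure of semi-abelian varieties there is a semi-abelian variety $S'$ and an isogeny $h\colon S'\to S$ with $h$ of degree $|N|$ such that $\phi$ lifts to a regular $g\colon H_{c,alg}^m(X,\Z(n))\to S'(k)$ with $h\circ g = \phi$ — this is exactly the universal such lift, obtained by taking the cover corresponding to $N\subseteq S(k)_{tors}$ (using that $H_{c,alg}^m(X,\Z(n))$ is divisible, lifts of $g$ exist; using surjectivity via $S_0$, regularity of $g$ is inherited). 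That $g$ is maximal: any further factorization $g = \pi\circ\psi$ through an isogeny $\pi\colon S''\to S'$ with $\psi$ regular would give, by composing, a factorization of $\phi$ through the isogeny $h\circ\pi\colon S''\to S$, so $\ker(h\circ\pi)\subseteq N = \ker h$, forcing $\ker\pi = 0$, i.e. $\pi$ is an isomorphism. Finally $g$ is surjective: its image is a semi-abelian subvariety of $S'$ by Corollary~\ref{lem: the image is semiabelian} which maps onto $S$ under the finite map $h$, hence has full dimension, hence equals $S'$.

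I expect the main obstacle to be the finiteness of $N$: one must rule out an infinite strictly increasing chain of isogenies $\cdots\to S^{(2)}\to S^{(1)}\to S$ through each of which $\phi$ factors regularly. The resolution is the boundedness coming from Proposition~\ref{rem: surjection from a semiabelian variety}, which pins the whole situation down to a fixed semi-abelian variety $S_0$ surjecting compatibly onto every $S^{(i)}$; the degrees of the isogenies $S^{(i)}\to S$ are then bounded by (a divisor of) $|\ker(S_0\to S)|$ — after first arranging, by enlarging $S_0$ if necessary, that $S_0\to S$ has finite kernel, which one can do because the image is semi-abelian and $S_0$ may be replaced by a semi-abelian variety mapping isogenously onto that image composed with appropriate factors. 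The rest (existence of the lift $g$, its regularity, maximality, surjectivity) is then formal, using divisibility of $H_{c,alg}^m(X,\Z(n))$ and Corollary~\ref{lem: the image is semiabelian} as indicated.
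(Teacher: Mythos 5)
Your strategy shares its essential input with the paper's proof: both rest on Proposition~\ref{rem: surjection from a semiabelian variety}, which ties every factorization of $\phi$ through an isogeny to the single homomorphism $u=\phi\circ w_{Y_0}\colon S_0\to S$, and both then extract from this a bound that forbids unlimited refinement. The mechanism for the bound is where you genuinely differ. The paper iterates the factorization, supposes an infinite tower $\cdots\to S_2\to S_1\to S$ of isogenies none of which is an isomorphism, and derives a contradiction from the strictly increasing chain of function fields $K(S)\subset K(S_1)\subset\cdots$ inside $K(S_0)$, using Serre's lemma that a subextension of a finitely generated field extension is finitely generated. You instead bound the degree of each isogeny $\pi\colon S''\to S$ directly: regularity of $\psi$ lifts $u$ to $u''\colon S_0\to S''$, $u''$ is surjective, and $\ker\pi\cong\ker u/\ker u''$. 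This is more elementary in that it avoids the field-theoretic lemma, but it pushes all the difficulty into the group scheme $\ker u$, and that is exactly where your write-up has two gaps.

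First, your reduction to the case where $\ker u$ is finite is not justified: ``replacing $S_0$'' is only legitimate if the replacement still receives all the lifted maps $u''$, and the reason you give (the image being semi-abelian) does not produce that. The correct repair is to note that each $\ker u''$ is a closed subgroup scheme of finite index in $\ker u$, hence contains the reduced identity component $(\ker u)^0_{red}$; therefore every finite quotient $\ker\pi$ of $\ker u$ factors through the fixed finite group scheme $\ker u/(\ker u)^0_{red}$, which gives the uniform bound with no replacement at all. (Beware that $\ker u$ may be positive-dimensional and, in positive characteristic, non-reduced; the paper's function-field argument absorbs inseparability into field degrees and never has to confront this.) Second, the subgroup $N\subseteq S(k)$ ``generated by the kernels'' is not well defined, since $\ker\pi$ lives in $S''$, not in $S$; comparing kernels of different covers of $S$ requires dual isogenies and is delicate for inseparable $\pi$. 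You do not need $N$: once the degrees are bounded, first replace $S$ by $\mathrm{im}(\phi)$ using Corollary~\ref{lem: the image is semiabelian}, then choose a factorization $\phi=h\circ g$ with $g$ regular and $h$ an isogeny of maximal degree. Any further factorization $g=\pi\circ\psi$ yields $\phi=(h\circ\pi)\circ\psi$ with $\deg(h\circ\pi)=\deg h\cdot\deg\pi\le\deg h$, forcing $\pi$ to be an isomorphism, and surjectivity of $g$ follows from Corollary~\ref{lem: the image is semiabelian} as you say. With these two repairs your argument is complete.
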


\begin{proof}
We follow the proof of \cite[Th\'eor\`eme 1]{Serre}. By Proposition~\ref{rem: surjection from a semiabelian variety}, we may assume that $\phi$ is surjective. 
If $\phi$ is maximal, there is nothing to prove.

If $\phi$ is not maximal, there is a factorization
\begin{displaymath}
\xymatrix{& S_1(k) \ar[d]^-{\pi_1(k)}\\
\Gamma_{alg}(A) \ar@{->>}[r]_-\phi \ar[ur]^-{\phi_1} & S(k),} 
\end{displaymath}
such that $\phi_1$ is regular and $\pi_1$ is an isogeny that is not an isomorphism. 
If $\phi_1$ is maximal, there is nothing more to do. If it is not, repeat this process. 

Suppose we obtain an infinite tower
\begin{displaymath}
\xymatrix{& \vdots\ar[d]^{\pi_3(k)}\\
& S_2(k) \ar[d]^{\pi_2(k)}\\
& S_1(k) \ar[d]^{\pi_1(k)}\\
\Gamma_{alg}(A) \ar@{->>}[r]_-\phi \ar[ur]_-{\phi_1} \ar[uur]^-{\phi_2}  & S(k),} 
\end{displaymath}
where $\phi_i$'s are regular and $\pi_i$'s are isogenies but not isomorphisms. 
Using Proposition~\ref{rem: surjection from a semiabelian variety}, choose a semi-abelian 
variety $S_0$ and $Y_0\in Hom_{\mathcal A}(\M(S_0),A)$ such that $\phi\circ w_{Y_0}$ is surjective. Then, for any $i,$ since $\pi_i$ is an isogeny, $\phi_i\circ w_{Y_0}$ is surjective.

Therefore, we obtain homomorphisms between function fields
\begin{displaymath}
\xymatrix{& \vdots \\
& K(S_2) \ar@{_{(}->}[ddl] \ar@{^{(}->}[u]_{j_3}\\
& K(S_1) \ar@{_{(}->}[dl]    \ar@{^{(}->}[u]_{j_2}\\
K(S_0) & K(S)  \ar@{_{(}->}[l] \ar@{^{(}->}[u]_{j_1}}
\end{displaymath}
where none of the $j_i$'s are isomorphisms. Now, the extension $\bigcup_{i\geq1}K(S_i)/ K(S)$ cannot be finitely generated, but $K(S_0)$ is a finitely generated field over $K(S).$ This is a contradiction by \cite[Lemme 1]{Serre}.
\end{proof}

\begin{prop}\label{prop: existence criterion}
Consider a fixed object $A\in\mathcal A.$ There is a universal regular homomorphism for $\Gamma_{alg}(A)$ if and only if there is a constant $c$ such that for any maximal homomorphism $\phi\colon \Gamma_{alg}(A)\longrightarrow S(k),$ the inequality $\dim S\leq c$ holds. In fact, a maximal homomorphism with a maximal dimensional target is a universal regular homomorphism.
\end{prop}

\begin{proof}
$``\Rightarrow"$ is clear. We prove the converse by combining the arguments in \cite[Th\'eor\`eme 2]{Serre} and \cite[Proposition 2.1]{Murre}. Let $\phi\colon \Gamma_{alg}(A)\longrightarrow S(k)$ be a maximal homomorphism with a maximal dimensional target $S.$ 
Suppose $\phi'\colon \Gamma_{alg}(A)\longrightarrow S'(k)$ is another regular homomorphism. 

Since $\phi\times\phi'\colon \Gamma_{alg}(A)\longrightarrow (S\times S')(k)$ is clearly a regular homomorphism, Lemma~\ref{lem: maximal homomorphism factorization} gives a factorization
\[\phi\times\phi'\colon \Gamma_{alg}(A)\buildrel g\over\longrightarrow S''(k)\buildrel i(k)\over\longrightarrow (S\times S')(k)\]
with a maximal homomorphism $g$ and a finite morphism $i\colon S''\longrightarrow S\times S'.$ Consider the commutative diagram
\begin{displaymath}
\xymatrix{& & S(k) \\
\Gamma_{alg}(A) \ar@{->>}[r]^-{g} \ar@/^/@{->>}[urr]^-{\phi} \ar@/_/[drr]_-{\phi'} & S''(k) \ar[r]^-{i(k)}   & (S\times S')(k) \ar[u]_-{p(k)} \ar[d]^-{p'(k)}\\
& & S'(k)}
\end{displaymath}
where $p$ (resp., $p'$) is the projections $S\times S'\longrightarrow S$ (resp., $S\times S'\longrightarrow S'$).

Since $\phi$ is surjective, $p\circ i$ is surjective. Since $S$ has the maximal dimension, we must have $\dim S=\dim S''.$ 
Therefore, $p\circ i$ is an isogeny. Now, since $\phi$ is a maximal homomorphism, $p\circ i$ is an isomorphism. Let us put
\[r:=(p\circ i)^{-1}:S\longrightarrow S'',\]
and define $h:=p'\circ i\circ r\colon S\longrightarrow S'.$ We then have
\[h(k)\circ\phi = p'(k)\circ i(k)\circ r(k)\circ \phi = p'(k)\circ i(k)\circ g= \phi'.\]

Lastly, $h$ is the only scheme morphism for which  $\phi'=h(k)\circ\phi$ holds because $\phi$ is surjective and a morphism from $S$ to $S'$ is uniquely determined on $S(k),$ which is Zariski dense in $S$ (see Remark~\ref{rem: uniqueness}).
\end{proof}

\begin{cor}\label{cor: existence criterion with surjective regular homomorphisms}
Consider a fixed object $A\in\mathcal A.$ There is a universal regular homomorphism for $\Gamma_{alg}(A)$ if and only if there is a constant $c$ such that for any surjective regular (not necessarily maximal) homomorphism $\phi\colon \Gamma_{alg}(A)\longrightarrow S(k),$ the inequality $\dim S\leq c$ holds.
\end{cor}

\begin{proof}
It is immediate from Proposition~\ref{prop: existence criterion}.
\end{proof}

We finish this section with a more practical criterion. The statement and proof of the following corollary are the result of considerable improvement by the referee of the author's earlier version, which unnecessarily involved infinitely many prime numbers $l.$

\begin{cor}\label{cor: practical criterion}
Consider a fixed object $A\in\mathcal A.$ Suppose there exists a prime number $l$ different from $\mathrm{char}~k$ for which the $l$-torsion part $_l \Gamma_{alg}(A)$ of $\Gamma_{alg}(A)$ is finite. Then, $\Gamma_{alg}(A)$ has a universal regular homomorphism. 
\end{cor}

\begin{proof}
Let $\phi\colon\Gamma_{alg}(A)\longrightarrow S(k)$ be an arbitrary surjective regular homomorphism. Since it is surjective on torsion (Proposition~\ref{prop: an algebraic representative is surjective}), it induces a surjective homomorphism on the $l$-primary torsion part 
\[\phi\{l\}\colon\Gamma_{alg}(A)\{l\}\longrightarrow S(k)\{l\}.\]
Now, if $_l \Gamma_{alg}(A)$ is finite, $\Gamma_{alg}(A)\{l\}$ is a direct sum of a finite group and an $l$-divisible torsion group of finite corank. (To see this, write $\Gamma_{alg}(A)\{l\}$ as a direct sum of a divisible group and a reduced group and apply \cite[Theorem 9]{Kaplansky}.) Therefore, the corank of $S(k)\{l\}$ is bounded by the corank of $\Gamma_{alg}(A)\{l\}.$ Since $l\neq \mathrm{char}~k,$ we have $\dim S\leq \mathrm{corank}~S(k)\{l\}.$ Now the corollary follows from Corollary~\ref{cor: existence criterion with surjective regular homomorphisms}.
\end{proof}


\section{The case of motives}\label{section: The case of motives}

We apply the results of the previous section to the triangulated category of Voevodsky motives. We take $\mathcal A$ to be the closed symmetric monoidal triangulated category $\DM_{Nis}^{eff}(k)$ defined in \cite[Definition 11.1.1]{Cisinski-Deglise} and $\mathcal M \colon \Sm/k\longrightarrow \mathcal A$ to be the monoidal functor that sends $X\in\Sm/k$ to the motive $M(X).$ We follow the notation introduced in Section~\ref{section: introduction}.


\subsection{Algebraic part}\label{section: algebraic part motives}

For $X\in \Sch/k,$ we put $H_0(X,\Z):=\Hom_{\DM_{Nis}^{eff}(k)}(\Z,M(X)).$ This is isomorphic to Suslin's algebraic singular homology in degree $0$ by \cite[Proposition 14.18]{MVW}. If $X$ is connected, the structure morphism of $X$ induces the degree map
\[deg\colon H_0(X,\Z) \longrightarrow \Hom_{\DM_{Nis}^{eff}(k)}(\Z,\Z)\cong\Z.\]
The kernel of $deg$ is denoted by $H_0(X,\Z)^0.$

\begin{prop}[{\cite[Lemma 7.10]{Bloch-Ogus},\cite[Lemma 9]{Weil}}]\label{prop: algebraic part is divisible}
If $k$ is algebraically closed, then $\Gamma_{alg}(A)$ is divisible for any $A\in \DM_{Nis}^{eff}(k).$
\end{prop}

\begin{proof}
By Lemma~\ref{lem: curves are enough}, we have the equality
\[\Gamma_{alg}(A)=\bigcup_{C, ~{\text{smooth curve}}}\mathrm{im}\{H_0(C,\Z)^0\otimes \Hom_{\DM_{Nis}^{eff}(k)}(M(C),A)\buildrel P\over\longrightarrow \Gamma(A)\},\]
where $P$ is defined by the composition of morphisms in $\DM_{Nis}^{eff}(k).$ Thus, any element of $\Gamma_{alg}(A)$ 
is an image of some element of $H_0(C,\Z)^0$ for some smooth curve $C.$ Now, $H_0(C,\Z)^0$ is divisible because
it is isomorphic to $Alb_C^0(k)$ by \cite[Theorem 3.4.2~(3)]{VSF5}. Therefore, $\Gamma_{alg}(A)$ is also divisible.
\end{proof}

We recover Fulton's algebraic equivalence at least for smooth schemes.

\begin{prop}\label{prop: comparison with the classical algebraic part}
Suppose $k$ is perfect and let $X\in\Sm/k.$ Let $A^r(X)$ be the algebraic part of $CH^r(X)$ in the sense of Fulton (see Subsection~\ref{section: The case of Chow groups}). Then, there is a natural isomorphism
\[\Gamma_{alg}(\underline{\Hom}(M(X),\Z(r)[2r]))\buildrel\cong\over\longrightarrow A^r(X).\]
\end{prop}

\begin{proof}
This isomorphism is induced by the natural isomorphism
\[F\colon  \Hom_{\DM_{Nis}^{eff}(k)}(M(X),\Z(r)[2r])\longrightarrow CH^r(X),\]
in \cite[Theorem 19.1]{MVW}. The contravariant naturality of $F$ implies the commutativity of the diagram
\begin{displaymath}\xymatrixcolsep{-0.1cm}\xymatrixrowsep{1cm}
\xymatrix{\Hom_{\DM_{Nis}^{eff}(k)}(M(T), \underline{\Hom}(M(X),\Z(r)[2r]) \ar[d]_{\text{adjunction}}^\cong \ar[rd]^-{t_1^*-t_0^*}  \\
\Hom_{\DM_{Nis}^{eff}(k)}(M(T)\otimes M(X),\Z(r)[2r]) \ar[d]_F^\cong \ar[rd]_-{t_1^*\otimes id_{M(X)}-t_0^*\otimes id_{M(X)}} & \Hom_{\DM_{Nis}^{eff}(k)}(\Z,\underline{\Hom}(M(X),\Z(r)[2r])) \ar[d]^{\text{adjunction}}_\cong  \\
CH^r(T\times X) \ar[rd]_-{(\cdot)_{t_1}-(\cdot)_{t_0}} & \Hom_{\DM_{Nis}^{eff}(k)}(\Z\otimes M(X),\Z(r)[2r]) \ar[d]^F_\cong \\
& CH^r(X)}
\end{displaymath}
for any smooth scheme $T$ and any $t_0$ and $t_1\in T(k).$ Therefore, $F$ induces an isomorphism $\Gamma_{alg}(\underline{\Hom}(M(X),\Z(r)[2r]))\longrightarrow A^r(X).$
\end{proof}

\begin{rem}\label{prop: comparison with the classical algebraic part under resolution}
(This remark was originally stated for an equidimensional scheme $X.$ We thank the referee for pointing out that the equidimentionality is unnecessary if we work with a Chow group indexed by dimension, not by codimension.)

Let us assume the resolution of singularities. In this case, for an arbitrary $X\in \Sch/k,$ we have a comparison isomorphism between the
Borel-Moore homology $H_{2r}^{BM}(X,\Z(r))\buildrel\text{def}\over = \Hom_{\DM_{Nis}^{eff}(k)}(\Z(r)[2r], M^c(X))$ and the Chow group $CH_{r}(X)$ (\cite[Proposition 19.18]{MVW} is stated for equidimensional $X$; see \cite[Theorem 5.3.14]{Kelly} for a statement for general $X$).
Thus, we may expect that it induces an isomorphism 
\[\Gamma_{alg}(\underline{\Hom}(\Z(r)[2r],M^c(X)))\buildrel\cong\over\longrightarrow A_{r}(X),\]
where $A_r(X)$ is now defined as
\[A_r(X)=\bigcup_{\substack{T\in alg\\ t_0,t_1\in T(k)}}\mathrm{im}\{CH_{r+\dim T}(T\times X)\buildrel (\cdot)_{t_1}-(\cdot)_{t_0}\over\longrightarrow CH_r(X)\},\]
where $(\cdot)_{t_{i}}$ is a refined Gysin homomorphism (\cite[Section 6.2]{Fulton}).

For this to be true, the following diagram needs to commute for all $T\in alg$ and $t_0, t_1\in T(k)$:
\begin{displaymath}\xymatrixcolsep{-0.6cm}
\xymatrix{\Hom_{\DM_{Nis}^{eff}(k)}(M(T), \underline{\Hom}(\Z(r)[2r],M^c(X)))   \ar[rd]^-{t_1^*-t_0^*} \\
 CH_{r+\dim T}(T\times X) \ar[rd]_-{(\cdot)_{t_1}-(\cdot)_{t_0}}  \ar@{}[u]|-{\rotatebox{90}{$\cong$}} & \Hom_{\DM_{Nis}^{eff}(k)}(\Z, \underline{\Hom}(\Z(r)[2r],M^c(X))) \\
& CH_r(X) \ar@{}[u]|-{\rotatebox{90}{$\cong$}} ,}
\end{displaymath}
where the right vertical isomorphism is given by the comparison isomorphism mentioned in the previous paragraph, and the left is given by a comparison isomorphism together with a duality isomorphism in \cite[Theorem 8.2]{Friedlander-Voevodsky}. However, the author does not know if this diagram actually commutes.
\end{rem}

The following proposition on zero cycles is a generalization of \cite[Proposition 3.11]{ACMVparameter}, which proved the statement for the Chow groups of smooth integral projective schemes. 

\begin{prop}\label{prop: algebraic part of zero cycles}
Suppose $k$ is perfect and let $X\in \Sch/k.$ Then, there is an inclusion $\Gamma_{alg}(M(X))\subset H_0(X,\Z)^0.$
If $X$ is smooth, quasi-projective and connected (i.e. if $X\in alg$), then this inclusion is an equality. If $k$ is algebraically closed, the equality holds for any irreducible $X.$
\end{prop}

\begin{proof}
Let $str_S$ denote the structure morphism of $S\in \Sch/k.$ To show the inclusion $\Gamma_{alg}(M(X))\subset H_0(X,\Z)^0,$ we need to prove that for any $T\in alg,$ any $t_0$ and $t_1\in T(k)$ and any $Y\in \Hom_{\DM_{Nis}^{eff}(k)}(M(T),M(X)),$ the composition
$Y\circ (M(t_1)-M(t_0))$ belongs to $H_0(X,\Z)^0;$ in other words, the composition
\[ \Z\buildrel{M(t_1)-M(t_0)}\over\longrightarrow M(T)\buildrel Y\over\longrightarrow M(X)\buildrel M(str_X)\over\longrightarrow\Z\]
in $\DM_{Nis}^{eff}(k)$ is zero.

Indeed, since $T$ is smooth and connected, the group $\Hom_{\DM_{Nis}^{eff}(k)}(M(T),\Z)\cong \Z$ is generated
by $M(str_T).$ Thus, there is an integer $n$ such that $M(str_X)\circ Y=n\cdot M(str_T).$ 
Therefore, $M(str_X)\circ Y\circ (M(t_1)-M(t_0))=n\cdot M(str_T)\circ (M(t_1)-M(t_0))=n\cdot (id_\Z-id_\Z)=0.$

Next, let us prove the inclusion in the other direction. Let us assume that $X$ is a smooth curve for the moment. Let $x$ be an element of $H_0(X,\Z)^0.$ It is represented by the difference $z-w$ of two effective zero cycles $z$ and $w$ of the same degree, say, $d.$ 

Let $X^d$ (resp. $X^{(d)}$) be the $d$-fold product (resp., symmetric product) of $X.$ By \cite[Proposition 3.2]{MilneJV}, $X^{(d)}$ is a smooth quasi-projective $k$-scheme. Let $D_i \subset X^d\times X$ be the graph of the $i$-th projection $p_i\colon X^d\longrightarrow X.$ Now the finite correspondence $D=\sum_{i=1}^dD_i\in Cor_k(X^d,X)$ in the sense of \cite[Definition 1.1]{MVW} descends to the finite correspondence $D_{can}\in Cor_k(X^{(d)},X)$ by \cite[Example 3.12]{MilneJV}. This defines a morphism, which we shall write with the same symbol, $D_{can}\in \Hom_{\DM_{Nis}^{eff}(k)}(M(X^{(d)}),M(X)).$ Now, by [Ibid., Theorem 3.13], there exist rational points $Z$ and $W\in X^{(d)}(k)$ such that $D_{can}\circ M(Z)=z$ and $D_{can}\circ M(W)=w$ in $\DM_{Nis}^{eff}(k)$. Hence, we have $(Z^*-W^*)(D_{can})=z-w=x.$ Therefore, $x$ belongs to $\Gamma_{alg}(M(X)).$
 
Now, let us treat the higher dimensional case. Notice that if $X\in\Sch/k$ is irreducible and $\dim X\geq 2,$ then, for any $x\in H_0(X,\Z)^0,$ there is an integral curve $C$ on $X$ that contains the support of a zero cycle representing $x$ by Lemma~\ref{lem: Bertini quote}. $C$ can be chosen to be smooth if $X$ is additionally smooth and quasi-projective.

Hence, for the case $X\in alg,$ the functoriality of algebraic part, the already proved first claim of the proposition and the case of smooth curves give the commutative diagram
\begin{displaymath}
\xymatrix{\Gamma_{alg}(M(X)) & \subset & H_0(X,\Z)^0\\
\Gamma_{alg}(M(C)) \ar[u] & = & H_0(C,\Z)^0 \ar[u], }
\end{displaymath}
where the vertical arrows are induced by the inclusion $C\hookrightarrow X.$ Now, by the choice of $C,$ $x$ is in the image of $H_0(C,\Z)^0.$ This implies $x\in \Gamma_{alg}(M(X)).$

The remaining case of $X$ only irreducible but $k$ algebraically closed follows by replacing $C$ in the above diagram with its normalization $\tilde C$ because zero cycles on $C$ can be lifted to $\tilde C$ when the base field is algebraically closed.
\end{proof}

\begin{rem}\label{rem: counterexample}
If Remark~\ref{prop: comparison with the classical algebraic part under resolution} is affirmative, \cite[Example 3.13]{ACMVparameter} provides a singular scheme $X$ over a non-algebraically closed field for which the inclusion $\Gamma_{alg}(M(X))\subset H_0(X,\Z)^0$ is strict.
\end{rem}


\subsection{Unpointed regular homomorphisms}\label{section: unpointed regular homomorphism}
 
The following main definition, formulation and ideas are due to the referee. This subsection should be attributed to him or her. As in Subsection~\ref{section: regular homomorphism}, {\em we assume the base field $k$ to be algebraically closed in this subsection.} 

We consider an ``unpointed" version of regular homomorphisms, which is defined on the whole $\Gamma(A).$ The unpointed version provides a natural framework to relate universal regular homomorphisms (on the algebraic part) and $1$-motivic theories such as \cite{Ramachandran} and \cite{Ayoub-Barbieri-Viale}. In order to deal with non-algebraic part, we need a larger 
class of group schemes as in \cite{Ramachandran}.

\begin{defn}\label{defn: semi-abelian scheme}
A group scheme $G$ locally of finite type over $k$ is called a {\bf semi-abelian scheme} if its identity component is a semi-abelian variety. 
\end{defn}

\begin{defn}\label{defn: unpointed regular homomorphism}
Let $\mathcal M\colon \Sm/k\longrightarrow \mathcal A$ be a functor as in Subsection~\ref{section: The algebraic part} and let $G$ be a semi-abelian scheme. A group homomorphism $\phi\colon \Gamma(A)\longrightarrow G(k)$ is called an {\bf unpointed regular homomorphism} if for any $T\in \Sm/k$ (not necessarily connected nor quasi-projective) and 
any $Y\in \Hom_{\mathcal A}(\mathcal M(T), A),$ the composition 
\[T(k)\buildrel Y_*\over\longrightarrow \Gamma(A)\buildrel\phi\over\longrightarrow G(k),\]
is induced by a morphism $T\longrightarrow G$ of schemes. Here, $Y_*$ sends $t\in T(k)$ to $Y\circ \mathcal M(t).$ 
\end{defn}

The morphism $T\longrightarrow G$ in the above definition is unique if it exists (see Remark~\ref{rem: uniqueness}). Although the definition is stated for an abstract functor $\mathcal M,$ we only consider the case $\mathcal M = M\colon \Sm/k\longrightarrow \DM_{Nis}^{eff}(k)$ in this paper. 

Let $\Sh_{Nis}^{tr}(\Sm/k)$ denote the category of Nisnevich sheaves with transfers (\cite[Lecture 13]{MVW}) and let $\HI_{Nis}(k)$ be the full subcategory of homotopy invariant Nisnevich sheaves with transfers ([Ibid., Definition 2.15]). For $X\in\Sm/k,$ we write $\tilde X:=\Hom_{\Sm/k}(-,X)$ for the presheaf of sets on $\Sm/k$ represented by $X.$ If $G$ is a semi-abelian scheme, $\tilde G$ belongs to $\HI_{Nis}(k)$ by \cite[Lemma 1.4.4]{Barbieri-Viale-Kahn} (see also \cite[Lemma 3.2]{Spiess-Szamuely}). For any $X\in\Sch/k,$ the cohomology sheaf $h_m^{Nis}(X):= H^m(C_*\Z_{tr}(X))_{Nis}$ ($m\in\Z$) of the Suslin complex $C_*\Z_{tr}(X)$ also belongs to $\HI_{Nis}(k)$ (see \cite[Lemma 3.2.1]{VSF5}).

\begin{lem}\label{lem: link}
Let $A\in \HI_{Nis}(k)$ and let $G$ be a semi-abelian scheme. Then, taking sections over $k$ gives rise to a canonical isomorphism of abelian groups
\[g\colon \Hom_{\HI_{Nis}(k)}(A,\tilde G)\buildrel\cong\over\longrightarrow \{\Gamma(A)\buildrel\phi\over\longrightarrow G(k)|\text{ $\phi$ is unpointed regular}\}.\]
Furthermore, if $A= h_0^{Nis}(X)$ ($X\in\Sch/k$), the canonical morphism $M(X)\longrightarrow A$ in $\DM_{Nis}^{eff}(k)$ induces a bijection
\[\{\Gamma(A)\longrightarrow G(k)|\text{ unpointed regular}\}\]
\[\longrightarrow \{\Gamma(M(X))\longrightarrow G(k)|\text{ unpointed regular}\}.\]
(The groups $\Gamma(h_0^{Nis}(X))$ and $\Gamma(M(X))$ are canonically isomorphic as explained in the proof below. The point of the second claim is that the definition of regularity depends on the motives $h_0^{Nis}(X)$ and $M(X).$) 
\end{lem}

\begin{proof}
Let $T\in \Sm/k$ and $Y\in \Hom_{\DM_{Nis}^{eff}(k)}(M(T),A).$ Since $A$ is a homotopy invariant Nisnevich sheaf with transfers, 
$Y$ can be regarded as a morphism $Y'\colon \tilde T\longrightarrow A$ of Nisnevich sheaves of sets under the isomorphisms
\begin{eqnarray}
\Hom_{\DM_{Nis}^{eff}(k)}(M(T),A)&\cong& \Hom_{D(\Sh_{Nis}^{tr}(\Sm/k))}(\Z_{tr}(T),A)\nonumber\\
&\cong& A(T)\nonumber\\
&\cong& \Hom_{\Sh_{Nis}(\Sm/k;\mathrm{Set})}(\tilde T,A),\nonumber
\end{eqnarray}
where the first isomorphism follows from \cite[Eq. (14.6.1)]{MVW}.

Let us first show that the map $g$ is well-defined; namely, if $\tilde \phi\in \Hom_{\HI_{Nis}(k)}(A,\tilde G),$ then $\tilde\phi(k)\colon \Gamma(A)\longrightarrow G(k)$ is an unpointed regular homomorphism. We need to show, for $T$ and $Y$ as above, that the composition 
$T(k)\buildrel Y_*\over\longrightarrow \Gamma(A)\buildrel \tilde\phi(k)\over\longrightarrow G(k)$
is induced by a scheme morphism $T\longrightarrow G.$ But, this is the section over $k$ of the morphism $\tilde T\buildrel Y'\over\longrightarrow A\buildrel\tilde\phi\over\longrightarrow \tilde G$ of sheaves of sets. By Yoneda's lemma, this is induced by a scheme morphism $T\longrightarrow G.$

Next, let us construct the inverse of $g.$ Suppose $\phi\colon \Gamma(A)\longrightarrow G(k)$ is an unpointed regular homomorphism. For a smooth scheme $T,$ define 
\[f_T\colon \Hom_{\DM_{Nis}^{eff}(k)}(M(T),A)\longrightarrow \tilde G(T)\]
by sending $Y\in \Hom_{\DM_{Nis}^{eff}(k)}(M(T),A)$ to the unique scheme morphism $T\longrightarrow G$ that induces the composition $T(k)\buildrel Y_*\over\longrightarrow \Gamma(A)\buildrel\phi\over\longrightarrow G(k).$ Here, the uniqueness follows because $T(k)$ is Zariski dense in $T$ (see Remark~\ref{rem: uniqueness}). Observe that $f_T$ is natural in $T$ with respect to finite correspondences, i.e. the following diagram commutes for any smooth scheme $T'$ and any finite correspondence $\gamma\in\Cor_k(T',T):$
\begin{displaymath}
\xymatrix{ \Hom_{\DM_{Nis}^{eff}(k)}(M(T),A) \ar[r]^-{f_T} \ar[d]_-{\gamma^*}& \tilde G(T) \ar[d]_-{\gamma^*} \\
\Hom_{\DM_{Nis}^{eff}(k)}(M(T'),A) \ar[r]^-{f_{T'}} & \tilde G(T').}
\end{displaymath}

Now, composing $f_T$ with the canonical isomorphism $A(T)\cong  \Hom_{\DM_{Nis}^{eff}(k)}(M(T),A),$ we obtain a morphism $\psi\colon A\longrightarrow \tilde G$ in $HI_{Nis}(k).$ It is routine to check that the assignment $\phi\mapsto\psi$ is inverse to $g.$ This proves the first half of the lemma.

For the second claim, consider the diagram
\begin{displaymath}
\xymatrix{\Hom_{\HI_{Nis}(k)}(h_0^{Nis}(X),\tilde G)  \ar[r]^-g \ar[d]_-{q^*}& \{\Gamma(h_0^{Nis}(X))\longrightarrow G(k)|\text{ unpointed regular}\} \ar[d]\\
\Hom_{\Sh_{Nis}^{tr}(k)}(\Z_{tr}(X),\tilde G) \ar[r]^-h& \{\Gamma(M(X))\longrightarrow G(k)|\text{ group homomorphism}\},}
\end{displaymath}
where both vertical arrows are induced by the canonical morphism $q\colon \Z_{tr}(X)\longrightarrow h_0^{Nis}(X)$ and $h$ is the composition of the canonical maps
\[\Hom_{\Sh_{Nis}^{tr}(k)}(\Z_{tr}(X),\tilde G)\longrightarrow \Hom_{\DM_{Nis}^{eff}(k)}(M(X),\tilde G)\]
and 
\[\Hom_{\DM_{Nis}^{eff}(k)}(M(X),\tilde G)\longrightarrow \{\Gamma(M(X))\longrightarrow G(k)|\text{ group homomorphism}\}.\]

First, notice that the right vertical arrow is injective because it is the pre-composition with the following series of canonical isomorphisms 
\[\Gamma(M(X))=\Hom_{\DM_{Nis}^{eff}(k)}(\Z,M(X))\cong\Hom_{D(\Sh_{Nis}^{tr}(k))}(\Z,C_*\Z_{tr}(X))\cong h_0^{Nis}(X)(k).\]
Here, the first isomorphism follows because $C_*\Z_{tr}(X)$ is an $\mathbb A^1$-local object (\cite[5.2.18]{Cisinski-Deglise}, \cite[Theorem 13.8]{MVW}) and the second is induced by $q.$

Second, notice that the image of the map $h$ consists of unpointed regular homomorphisms. This follows from the well-definedness of $g$ (i.e. the second paragraph of this proof) and the fact that any sheaf morphism $\Z_{tr}(X)\longrightarrow\tilde G$ factors through $q$ (\cite[Example 2.20]{MVW}).

Therefore it remains to show that $h\circ q^*$ is surjective onto the subgroup of unpointed regular homomorphisms from $\Gamma(M(X))$ to $G(k).$ Let $\phi'\colon \Gamma(M(X))\longrightarrow G(k)$ be an unpointed regular homomorphism. The construction of $f_T$ in the first part of the proof works verbatim for $\phi'$ and yields a morphism
\[f_{(-)}\colon \Hom_{\DM_{Nis}^{eff}(k)}(M(-),M(X)) \longrightarrow \tilde G\]
of homotopy invariant \emph{presheaves} with transfers. (For $T\in\Sm/k,$ $f_T(Y)$ is the morphism $T\longrightarrow G$ that induces $\phi'\circ Y_*.$) Taking the Nisnevich sheafification, we obtain a morphism in $\HI_{Nis}(k)$
\[h_0^{Nis}(X)\buildrel\text{def}\over=H^0( C_*\Z_{tr}(X))_{Nis}\longrightarrow \tilde G,\]
but this is by construction the preimage of $\phi'$ under $h\circ q^*.$
\end{proof}

Let $G^0$ denote the identity component of a semi-abelian scheme $G.$ Any unpointed regular homomorphism $\phi\colon \Gamma(A)\longrightarrow G(k)$ can be restricted to a regular homomorphism $\phi_{alg}\colon \Gamma_{alg}(A)\longrightarrow G^0(k).$ Indeed, for any connected $T\in\Sm/k$ pointed at any $t_0\in T(k)$ and any morphism $Y\in \Hom_{\DM_{Nis}^{eff}(k)}(M(T),A),$ the composition 
\[T(k)\buildrel w_Y\over\longrightarrow \Gamma_{alg}(A)\buildrel\phi\over\longrightarrow G(k)\]
is induced by a scheme morphism because $\phi\circ w_Y$ is the difference of the morphism that sends $t\in T(k)$ to $\phi(Y\circ M(t))$ and the constant morphism with value $\phi(Y\circ M(t_0)).$ Since $\phi\circ w_Y$ sends $t_0$ to zero in $G(k)$ and the image of $T$ under any scheme morphism is connected, the image of $T(k)$ lies in $G^0(k).$ As $\Gamma_{alg}(A)$ is generated by the image of $T(k)$ under $w_Y$ (where $T$ and $Y$ vary), the image of $\Gamma_{alg}(A)$ is contained in $G^0(k).$

Conversely, any regular homomorphism gives rise to an unpointed regular homomorphism. We thank the referee for the following construction using the divisibility of algebraic part. 

\begin{lem}\label{lem: construct unpointed regular from regular}
Let $A\in \DM_{Nis}^{eff}(k)$ and let $\psi\colon \Gamma_{alg}(A)\longrightarrow S(k)$ be a regular homomorphism. Choose a retraction $r\colon \Gamma(A)\longrightarrow \Gamma_{alg}(A)$ of the inclusion $\Gamma_{alg}(A)\subset \Gamma(A).$ Define $\psi_{un}$ as the composition
\[\psi_{un}\colon \Gamma(A)\buildrel r\oplus q\over\longrightarrow \Gamma_{alg}(A)\oplus(\Gamma(A)/\Gamma_{alg}(A))\buildrel \psi\oplus id\over\longrightarrow S(k)\oplus (\Gamma(A)/\Gamma_{alg}(A)),\]
where $q\colon \Gamma(A)\longrightarrow\Gamma(A)/\Gamma_{alg}(A)$ is the quotient map. Then, $\psi_{un}$ is an unpointed regular homomorphism. 
\end{lem}

\begin{proof}
First, notice that a retraction $r$ exists because $\Gamma_{alg}(A)$ is divisible by Proposition~\ref{prop: algebraic part is divisible}.
Now, we need to show that for an arbitrary $T\in\Sm/k$ and an arbitrary morphism  $Y\in\Hom_{\DM_{Nis}^{eff}(k)}(M(T),A),$ the composition 
\begin{displaymath}
\xymatrix{T(k) \ar[r]^-{Y_*} & \Gamma(A) \ar[r]^-{r\oplus q} \ar@/_20pt/[rr]_-{\psi_{un}} & \Gamma_{alg}(A)\oplus(\Gamma(A)/\Gamma_{alg}(A)) \ar[r]^-{\psi\oplus id} & S(k)\oplus (\Gamma(A)/\Gamma_{alg}(A))}
\end{displaymath}
is induced by a scheme morphism. Since it is enough to do this for each connected component of $T,$ we many assume that $T$ is connected. In this case, choosing a base point $t_0\in T(k),$ we can easily see that the above composition $\psi_{un}\circ Y_*$ is the translation of
\[T(k)\buildrel w_{Y}\over\longrightarrow \Gamma_{alg}(A)\buildrel\psi\over\longrightarrow S(k)\buildrel id\oplus 0\over\longrightarrow S(k)\oplus (\Gamma(A)/\Gamma_{alg}(A))\]
by the image of $t_0$ under $\psi_{un}\circ Y_*.$ Since $\psi$ is regular, $(id\oplus 0)\circ\psi\circ w_{Y}$ is induced by a scheme morphism, and hence, so is its translation $\psi_{un}\circ Y_*.$
\end{proof}

As usual, we say that an unpointed regular homomorphism $\phi\colon \Gamma(A)\longrightarrow G(k)$ is 
{\bf universal} if for any unpointed regular homomorphism $\phi'\colon \Gamma(A)\longrightarrow G'(k),$ there is a unique 
morphism of group schemes $h\colon G\longrightarrow G'$ such that $\phi'=h(k)\circ \phi.$

\begin{lem}\label{lem: pointed vs unpointed over closed fields}
Let $A\in \DM_{Nis}^{eff}(k).$ A universal regular homomorphism $\psi\colon \Gamma_{alg}(A)\longrightarrow S(k)$ exists if and only if a universal unpointed regular homomorphism $\phi\colon\Gamma(A)\longrightarrow G(k)$ exists. When they exist, $\phi_{alg}\colon \Gamma_{alg}(A)\longrightarrow G^0(k)$ (defined in the passage before Lemma~\ref{lem: construct unpointed regular from regular}) is a universal regular homomorphism for $\Gamma_{alg}(A).$
\end{lem}

\begin{proof}
For the ``if" part, let $\phi\colon\Gamma(A)\longrightarrow G(k)$ be a universal unpointed regular homomorphism. By Proposition~\ref{rem: surjection from a semiabelian variety}, the regular homomorphism $\phi_{alg}\colon \Gamma_{alg}(A)\longrightarrow G^0(k)$ gives rise to a surjective regular homomorphism $\phi_{alg}'\colon \Gamma_{alg}(A)\longrightarrow \mathrm{im}(\phi_{alg}).$ (We will eventually show $\mathrm{im}(\phi_{alg})=G^0(k).$) We claim that $\phi_{alg}'$ is universal.

Indeed, suppose $\psi'\colon \Gamma_{alg}(A)\longrightarrow S'(k)$ is an arbitrary regular homomorphism. $\psi'$ gives rise to an unpointed regular homomorphism $\psi'_{un}\colon \Gamma(A)\longrightarrow G'(k)$ by Lemma~\ref{lem: construct unpointed regular from regular}. By the universality of $\phi,$ there exists a unique homomorphism of groups schemes $h\colon G\longrightarrow G'$ with $\psi'_{un}=h(k)\circ\phi.$ Since $h$ can be restricted to a morphism between the identity components, it induces a homomorphism of semi-abelian varieties $h'\colon \mathrm{im}(\phi_{alg})\longrightarrow S',$ where $\mathrm{im}(\phi_{alg})$ denotes, by abuse of notation, the semi-abelian subvariety of $G^0$ whose group of $k$-points coincides with $\mathrm{im}(\phi_{alg}\colon\Gamma_{alg}(A)\longrightarrow G^0(k)).$ Now, by the construction, we have $\psi=h'(k)\circ\phi'_{alg}$ and $h'$ is the only morphism that satisfies this equality because $\phi'_{alg}$ is surjective and a morphism between smooth schemes is uniquely determined on the rational points as $k$ is algebraically closed (see Remark~\ref{rem: uniqueness}).

For the ``only if" part, suppose $\psi\colon \Gamma_{alg}(A)\longrightarrow S(k)$ is a universal regular homomorphism. By Lemma ~\ref{lem: construct unpointed regular from regular}, this gives rise to an unpointed regular homomorphism $\psi_{un}\colon \Gamma(A)\longrightarrow S(k)\oplus \Gamma(A)/\Gamma_{alg}(A).$ We claim that this is universal. 

Indeed, let $\phi'\colon\Gamma(A)\longrightarrow G'(k)$ be an arbitrary unpointed regular homomorphism. By the universality of $\psi,$ there is a unique homomorphism of group schemes $h\colon S\longrightarrow G'$ such that $h(k)\circ\psi=\phi'\circ\iota,$ where $\iota\colon \Gamma_{alg(A)}\longrightarrow \Gamma(A)$ is the inclusion. Now by the universality of pushout, there is a unique abelian group homomorphism $g\colon S(k)\oplus \Gamma(A)/\Gamma_{alg}(A)\longrightarrow G'(k)$ that makes the following diagram commutative:
\begin{displaymath}
\xymatrix{ \Gamma_{alg}(A) \ar[r]^-\psi \ar[d]_-\iota & S(k) \ar[d]_-{(id,0)} \ar@/^10pt/[ddr]^-{h(k)}\\
\Gamma(A) \ar[r]^-{\psi_{un}} \ar@/_10pt/[rrd]_-{\phi'} & S(k)\oplus \Gamma(A)/\Gamma_{alg}(A) \ar@{..>}[dr]^-{\exists!g} \\
 & & G'(k).}
\end{displaymath}
It now remains to show that $g$ is induced by a scheme morphism. But this is obvious because the diagram already shows that $g$ is induced by $h$ on the identity component.

For the final assertion, let $\phi\colon\Gamma(A)\longrightarrow G(k)$ be a universal unpointed regular homomorphism. We have shown that the restriction $\phi_{alg}'\colon\Gamma_{alg}(A)\longrightarrow \mathrm{im}(\phi_{alg})$ 
is a universal regular homomorphism, and this in turn gives rise to a universal unpointed regular homomorphism $(\phi_{alg}')_{un}$ whose target is $\mathrm{im}(\phi_{alg})\oplus \Gamma(A)/\Gamma_{alg}(A).$ Now, by the universality of the unpointed regular homomorphism, we conclude that $G^0(k)=\mathrm{im}(\phi_{alg}).$ Hence $\phi_{alg}=\phi_{alg}'.$
\end{proof}

\begin{rem}\label{rem: relation to ABV}
We thank the referee for pointing out the relation to the work of Ayoub and Barbieri-Viale (\cite{Ayoub-Barbieri-Viale}). Let $\mathrm{SAbS}/k$ be the category of semi-abelian schemes over $k.$ Universal unpointed regular homomorphisms can be considered as a partial left adjoint to the Yoneda embedding $\widetilde{(~)}\colon \mathrm{SAbS}/k\longrightarrow \HI_{Nis}(k)$ in the following sense. Let $\HI_{Nis}^{rep}(k)$ be the full subcategory of $\HI_{Nis}(k)$ consisting of sheaves $A\in \HI_{Nis}(k)$ such that $\Gamma(A)$ has a universal unpointed regular homomorphism, which we write as $\Gamma(A)\longrightarrow \widetilde{Alg(A)}(k)$ ($Alg(A)\in \mathrm{SAbS}/k$). By Lemma~\ref{lem: link}, the functor $Alg\colon \HI_{Nis}^{rep}(k)\longrightarrow \mathrm{SAbS}/k$ is left adjoint to $\widetilde{(~)}\colon \mathrm{SAbS}/k\longrightarrow \HI_{Nis}^{rep}(k).$

Thus, universal unpointed regular homomorphisms may be considered as a variant of the left adjoint functor $(~)^{\leq 1}\colon \HI_{tr}^{\acute et}(k)\longrightarrow \HI_{\leq 1}^{\acute et}(k)$ of the inclusion $\HI_{\leq 1}^{\acute et}(k)\subset \HI_{tr}^{\acute et}(k)$ defined in \cite[Definition 1.3.14]{Ayoub-Barbieri-Viale}. Here, $\HI_{tr}^{\acute et}(k)$ stands for the category of $\Z[1/p]$-coefficient homotopy invariant \'etale sheaves with transfers and $\HI_{\leq 1}^{\acute et}(k)$ is the smallest cocomplete Serre subcategory of the category of $\Z[1/p]$-coefficient \'etale sheaves with transfers that contains lattices (see \emph{[Ibid., the first sentence in Section 1.2]}) and sheaves represented by semi-abelian varieties. Notice that $\HI_{\leq 1}^{\acute et}(k)$ is cocomplete but $\mathrm{SAbS}/k$ consists of schemes locally of finite type.
\end{rem}


\subsection{Relation with Serre-Ramachandran's Albanese schemes}\label{subsection: Relation with Ramachandran's Albanese schemes}

Suppose $k$ is perfect. For any $X\in\Sm/k,$ there is a canonical map (\cite[the map (6)]{Spiess-Szamuely})
\begin{equation}
alb_X\colon \Z_{tr}(X)\longrightarrow \widetilde{Alb_X}
\end{equation}
of presheaves on $\Sm/k,$ where $\widetilde{Alb_X}$ is the sheaf represented by Serre-Ramachandran's Albanese scheme $Alb_X$ (\cite[Theorem 1.11]{Ramachandran}). This induces a morphism
\begin{equation}\label{map: sheaf albanese}
\tilde a_X\colon h_0^{Nis}(X)\longrightarrow \widetilde{Alb_X}
\end{equation}
in $\HI_{Nis}(k).$ (For details, see the discussion between Lemma 3.2 and Remark 3.3 in \cite{Spiess-Szamuely}.)

\begin{prop}\label{prop: Ramachandran-Serre}
Suppose $k$ is algebraically closed and $X\in\Sm/k.$ Let
\[a_X\colon \Gamma(h_0^{Nis}(X))= h_0^{Nis}(X)(k)\longrightarrow Alb_X(k),\]
be the section of $\tilde a_X$ over $k.$ Then, $a_X$ is a universal unpointed regular homomorphism for $\Gamma(h_0^{Nis}(X))$ and its restriction to the algebraic part 
\[\Gamma_{alg}(h_0^{Nis}(X))\longrightarrow Alb_X^0(k)\]
is a universal regular homomorphism.

Furthermore, via the canonical isomorphism $\Gamma(M(X))\longrightarrow \Gamma(h_0^{Nis}(X)),$ $a_X$ induces a universal unpointed regular homomorphism on $\Gamma(M(X)),$ and its restriction $\Gamma_{alg}(M(X))\longrightarrow Alb_X^0(k)$ is a universal regular homomorphism for $\Gamma_{alg}(M(X)).$
\end{prop}

\begin{proof}
By Lemma~\ref{lem: pointed vs unpointed over closed fields} and the second part of Lemma~\ref{lem: link}, it suffices to prove only the first statement that $a_X$ is a universal unpointed regular homomorphism for $\Gamma(h_0^{Nis}(X)).$ We thank the referee for providing the following proof.

By the first part of Lemma~\ref{lem: link}, it is enough to show that for any semi-abelian scheme $G,$ the pre-composition with $\tilde a_X$ 
\[\Hom_{\HI_{Nis}(k)}(\widetilde{Alb_X},\tilde G)\longrightarrow \Hom_{\HI_{Nis}(k)}(h_0^{Nis}(X),\tilde G)\]
is bijective. This map fits in a larger commutative diagram
\begin{displaymath}
\xymatrix{ \Hom_{\HI_{Nis}(k)}(\widetilde{Alb_X},\tilde G) \ar[r] & \Hom_{\HI_{Nis}(k)}(h_0^{Nis}(X),\tilde G) \ar[d]^-\sigma \\
\Hom_{\Sm/k}(X,G) \ar[u]^-\theta  \ar[r]^-\tau  & \Hom_{\Sh_{Nis}^{tr}(\Sm/k)}(\Z_{tr}(X),\tilde G)}
\end{displaymath}
where $\sigma,$ $\tau$ and $\theta$ are as follows. The map $\sigma$ is induced by the canonical map $q\colon\Z_{tr}(X)\longrightarrow h_0^{Nis}(X).$ For $a\in \Hom_{\Sm/k}(X,G),$ $\tau(a)\colon\Z_{tr}(X)\longrightarrow\tilde G$ is the morphism that sends $id_X\in\Z_{tr}(X)(X)$ to $a\in\tilde G(X).$ The map $\theta$ is given by the composition
\[\Hom_{\Sm/k}(X,G) \buildrel\cong\over\longleftarrow \Hom_{\mathrm{SAbS}/k}(Alb_X,G)\longrightarrow \Hom_{\HI_{Nis}(k)}(\widetilde{Alb_X},\tilde G),\]
where the first arrow is induced by the morphism $u_X\colon X\longrightarrow Alb_X$ that induces the morphism $\Z_{tr}(X)\buildrel alb_X\over\longrightarrow \widetilde{Alb_X}$ and the second by the Yoneda embedding $\widetilde{(-)}.$
The first map is an isomorphism by \cite[Theorem 1.11]{Ramachandran}.

Now, $\sigma$ is an isomorphism by the universality of $q$ (\cite[Example 2.20]{MVW}) and $\tau$ is also an isomorphism by definition. It is, therefore, enough to observe that $\theta$ is surjective. For this, let $f\in \Hom_{\HI_{Nis}(k)}(\widetilde{Alb_X},\tilde G).$ Evaluating $f$ at $u_X\in\widetilde{Alb_X}(X),$ we obtain $g\in \tilde G(X)=\Hom_{\Sm/k}(X,G),$ but then we clearly have $\theta(g)=f.$
\end{proof}

\begin{cor}\label{cor: existence in dim zero}
If $k$ is algebraically closed, then there exists a universal regular homomorphism for $\Gamma_{alg}(M(X))$ for any $X\in\Sch/k.$
\end{cor}

\begin{proof}
Since $M(X)=M(X_{red})$ by definition, we may assume that $X$ is reduced. Let $X_i$ ($i=1,\cdots,r$) be the irreducible components of $X.$ By de Jong's alteration \cite[Theorem 4.1]{de Jong}, there is a surjective morphism $X_i'\longrightarrow X_i$ from a smooth scheme $X_i'$ for each $r.$ The corollary follows once we show that the induced maps
\[\Gamma_{alg}\big(M(\coprod_{i=1}^r X'_i)\big)\buildrel f\over\longrightarrow \Gamma_{alg}\big(M(\coprod_{i=1}^r X_i)\big)\buildrel g\over\longrightarrow \Gamma_{alg}(M(X))\]
are surjective. Indeed, the surjectivity implies that any surjective regular homomorphism $\phi\colon \Gamma_{alg}(M(X))\longrightarrow S(k)$ is dominated by the universal regular homomorphism of $\Gamma_{alg}\big(M(\coprod_{i=1}^r X'_i)\big),$ which exists by Proposition~\ref{prop: Ramachandran-Serre}. By Corollary~\ref{cor: existence criterion with surjective regular homomorphisms}, we then conclude that a universal regular homomorphism exists for $\Gamma_{alg}(M(X)).$

To see the surjectivity of $f,$ notice that $\Gamma_{alg}(M(X_i))=H_0(X_i,Z)^0$ and $\Gamma_{alg}(M(X_i'))=H_0(X_i',Z)^0$ by Proposition~\ref{prop: algebraic part of zero cycles}. Since $k$ is algebraically closed, the surjectivity of $f$ follows from the surjectivity of the alterations $X_i'\longrightarrow X_i.$

For the map $g,$ consider the commutative square
\begin{displaymath}
\xymatrix{ \Gamma_{alg}\big(M(\coprod_{i=1}^r X_i)\big) \ar@{^{(}->}[r]^-h \ar[d]_-g & H_0(\coprod_{i=1}^r X_i,\Z)^0 \ar@{->>}[d] \\
\Gamma_{alg}(M(X)) \ar@{^{(}->}[r] & H_0(X,\Z)^0,}
\end{displaymath}
where the horizontal inclusions are those in Proposition~\ref{prop: algebraic part of zero cycles} and the vertical maps are canonical. The right vertical map is surjective because it is induced by the surjection $\coprod_{i=1}^rX_i\longrightarrow X.$

Now, it is straightforward to see that $\Gamma_{alg}\big(M(\coprod_{i=1}^r X_i)\big)$ is canonically isomorphic to $\bigoplus_{i=1}^r\Gamma_{alg}M(X_i),$ and the latter is canonically isomorphic to $\bigoplus_{i=1}^r H_0(X_i,\Z)^0$ by Proposition~\ref{prop: algebraic part of zero cycles}. Hence, the cokernel of $h$ is isomorphic to $\Z^{\oplus r-1}.$ Since $\Gamma_{alg}(M(X))$ is divisible by Proposition~\ref{prop: algebraic part is divisible}, this implies the surjectivity of $g$ as desired.
\end{proof}


\subsection{The existence of universal regular homomorphisms}

In this subsection, we derive the existence of universal regular homomorphisms from the finiteness of $\Z/l$-coefficient \'etale cohomology groups (Theorem~\ref{thm: existence main}). We begin by showing that $\DM_{Nis}^{eff}(k)$ satisfies Axiom~\ref{axiom}. We thank the referee for the following lemma and its proof.

\begin{lem}[{\cite{Lichtenbaum, Suslin-Voevodsky-Inventiones, VSF5}}]\label{lem: axiom}
Suppose $k$ is perfect. Let $C$ be a smooth connected affine curve pointed at a rational point $c\in C(k).$ Let $\iota_{c}\colon C\longrightarrow Alb_C^0$ be the canonical morphism that sends $c$ to $0.$ Then, the morphism $M(\iota_{c})\colon M(C)\longrightarrow M(Alb_C^0)$ in $\DM_{Nis}^{eff}(k)$ is a split monomorphism and has a multiplicative retraction.
\end{lem}

\begin{proof}
By \cite[Theorem 3.4.2]{VSF5}, there is a short exact sequence
\[0\longrightarrow\widetilde{Alb_C^0}\buildrel i\over\longrightarrow h_0^{Nis}(C)\longrightarrow \Z\longrightarrow0\]
of Nisnevich sheaves, where $i$ is the inverse of the isomorphism 
\[\mathrm{ker}\{h_0^{Nis}(C)\longrightarrow \Z\}\longrightarrow\widetilde{Alb_C^0}\]
that is induced by $\iota_{c};$ see also \cite[Lemma 3.2]{Spiess-Szamuely} and the succeeding paragraph there.

Since $C$ is affine, again by \cite[Theorem 3.4.2]{VSF5}, this short exact sequence gives rise to a distinguished triangle in $\DM_{Nis}^{eff}(k)$
\[\widetilde{Alb_C^0}\buildrel i'\over\longrightarrow M(C)\buildrel s\over\longrightarrow \Z\buildrel [+1]\over\longrightarrow,\]
where $s$ is induced by the structure morphism of $C.$

Now, define a morphism $\rho\in\Hom_{\DM_{Nis}^{eff}(k)}(M(Alb_C^0), M(C))$ as the composition
\[M(Alb_C^0)\buildrel (p,q)\over\longrightarrow \widetilde{Alb_C^0}\oplus\Z\buildrel i'\oplus M(c)\over\longrightarrow M(C),\]
where $p$ and $q$ are the canonical morphisms. (Notice that $i'\oplus M(c)$ is an isomorphism. We shall use this fact later in the proof.) We claim that $\rho$ is a multiplicative retraction of $M(\iota_{c}).$

For the multiplicativity, let 
\[M(p_1)+M(p_2)-M(m)-M(0)\colon M(Alb_C^0\times Alb_C^0)\longrightarrow M(Alb_C^0)\]
be the morphism as in Definition~\ref{defn: multiplicative}. Then, the morphism 
\[p\circ (M(p_1)+M(p_2)-M(m)-M(0))\colon M(Alb_C^0\times Alb_C^0)\longrightarrow\widetilde{Alb_C^0}\]
corresponds to $p_1+p_2-m-0=0\in \widetilde{Alb_C^0}(Alb_C^0\times Alb_C^0)$ under the identification $\Hom_{\DM_{Nis}^{eff}(k)}(M(Alb_C^0\times Alb_C^0),\widetilde{Alb_C^0})\cong \widetilde{Alb_C^0}(Alb_C^0\times Alb_C^0)$ (the isomorphism holds because $\widetilde{Alb_C^0}$ is homotopy invariant and hence $\A^1$-local by \cite[Proposition 14.8]{MVW}), and the morphism 
\[q\circ (M(p_1)+M(p_2)-M(m)-M(0))\colon M(Alb_C^0\times Alb_C^0)\longrightarrow\Z\]
corresponds to $str+str-str-str=0\in\Z_{tr}(k)(Alb_C^0\times Alb_C^0)$ under a similar identification. Here, $str$ stands for the structure morphism $Alb_C^0\times Alb_C^0\longrightarrow \Spec k.$ Therefore, $\rho\circ(M(p_1)+M(p_2)-M(m)-M(0))=(i'\oplus M(c))\circ0=0.$

To show that $\rho$ is indeed a retraction of $M(\iota_{c}),$ we need to prove 
\[(i'\oplus M(c))\circ(p,q)\circ M(\iota_c)= id_{M(C)}.\]
Since $i'\oplus M(c)$ is an isomorphism, this is equivalent to
\[(p,q)\circ M(\iota_c)\circ (i'\oplus M(c))= id_{\widetilde{Alb_C^0}\oplus\Z}.\]
Since it is clear that $q\circ M(\iota_c)\circ M(c)=id_\Z,$ it suffices to show that $p\circ  M(\iota_{c})\circ i' = id_{\widetilde{Alb_C^0}}$ holds. We prove this in the next lemma.
\end{proof}

\begin{lem}
With the same notation as in the previous proof, the following equality holds:
\[ p\circ  M(\iota_{c})\circ i' = id_{\widetilde{Alb_C^0}}.\]
\end{lem}

\begin{proof}
We keep using the symbols introduced in the previous proof. The proof is about unwinding the definition of $i'.$  Let us begin with the following commutative diagram in $\Sh_{Nis}^{tr}(k)$
\begin{displaymath}
\xymatrix{& \Z_{tr}(C) \ar[d]^-l \ar[dl]_-\kappa\\
\widetilde{Alb_C^0} & h_0^{Nis}(C)\ar[r]^-a \ar[l]_-{\bar\kappa}& \Z\\
& \ker(a) \ar@{^(->}[u]_k \ar[ul]^-\cong_-{\bar\kappa\circ k}}
\end{displaymath}
where $l$ is the quotient map, $\kappa$ is the map induced by $\iota_c$ and $\bar\kappa$ by $\kappa$ (by the universality of $l$; see \cite[Example 2.20]{MVW}), $a$ is canonical and $k$ is the inclusion. $\bar\kappa\circ k$ is an isomorphism by \cite[Theorem 3.4.2]{VSF5}. By definition, we have $i=k\circ (\bar\kappa\circ k)^{-1}.$

Consider the image of this diagram under the localization functor
\[L\colon D(\Sh_{Nis}^{tr}(k))\longrightarrow \DM_{Nis}^{eff}(k).\]
$L(l)$ is an isomorphism by \cite[Theorem 3.4.2]{VSF5} and we have by definition $i' = L(l)^{-1}\circ L(i).$ Also, it is immediate from definitions that $L(\kappa) = p\circ M(\iota_c).$ Hence, we have
\begin{eqnarray}
p\circ  M(\iota_{c})\circ i'&=& L(\kappa) \circ L(l)^{-1}\circ L(i) = L(\bar\kappa)\circ L(i)\nonumber\\ 
&=& L(\bar\kappa)\circ L(k)\circ L((\bar\kappa\circ k)^{-1})=L(\bar\kappa\circ k)\circ L((\bar\kappa\circ k)^{-1}) = id.\nonumber
\end{eqnarray}
\end{proof}

\begin{rem}\label{Weil's theorem}
From what we have proved and the $\mathbb A^1$-homotopy invariance of motives, we can extract the following fact on algebraic equivalence on Chow groups due to Weil (\cite[Lemma 9]{Weil}; see also \cite[p.60, Theorem 1]{Lang}):

Suppose $k$ is algebraically closed. Then, for any smooth 
proper connected scheme $X,$ the equality
\[A^r(X) = \bigcup_{\substack{ A\in\mathfrak A\\ a_0,a_1\in A(k)}}  \mathrm{im}\{CH^r(A\times X)\buildrel a_1^*-a_0^*\over\longrightarrow CH^r(X)\}\]
holds, where $A^r(X)$ is as in Subsection~\ref{section: The case of Chow groups} and $\mathfrak A$ is the class of abelian varieties.

Indeed, let $\mathbb M:=\underline{\Hom}(M(X),\Z(r)[2r]).$ By Proposition~\ref{prop: parametrization by semiabelian varieties} (which is applicable by Lemma~\ref{lem: axiom}) and Proposition~\ref{prop: comparison with the classical algebraic part}, it is enough to observe the equality (cf. the proof of Proposition~\ref{prop: algebraic part is divisible})
\begin{eqnarray}
&&\bigcup_{S\in\mathfrak S}\mathrm{im}\{H_0(S,\Z)^0 \times \Hom_{\DM_{Nis}^{eff}(k)}(M(S),\mathbb M)\buildrel P_S\over\longrightarrow \Gamma(\mathbb M)\}\nonumber\\
&=&\bigcup_{A\in\mathfrak A}\mathrm{im}\{H_0(A,\Z)^0 \times \Hom_{\DM_{Nis}^{eff}(k)}(M(A),\mathbb M)\buildrel P_A\over\longrightarrow \Gamma(\mathbb M)\},\nonumber
\end{eqnarray}
where $\mathfrak S$ is the class of semi-abelian varieties and $P_S$ and $P_A$ are defined by composition in $\DM_{Nis}^{eff}(k).$ By induction on the torus rank of $S,$ it suffices to show that for any exact sequence $0\to\mathbb G_m\to S\to S'\to0$ ($S$ and $S'$ are semi-abelian) there is an inclusion
\begin{eqnarray}
&&\mathrm{im}\{H_0(S,\Z)^0 \times \Hom_{\DM_{Nis}^{eff}(k)}(M(S),\mathbb M)\buildrel P_S\over\longrightarrow \Gamma(\mathbb M)\}\nonumber\\
&\subset&\mathrm{im}\{H_0(S',\Z)^0 \times \Hom_{\DM_{Nis}^{eff}(k)}(M(S'),\mathbb M)\buildrel P_{S'}\over\longrightarrow \Gamma(\mathbb M)\}.\nonumber
\end{eqnarray}

Since $S$ is a $\mathbb G_m$-torsor over $S'$ in the Zariski topology, there is an associated line bundle $p\colon E\longrightarrow S'$ with the
zero section $s\colon S'\longrightarrow E$ such that $E\setminus s(S')\cong S.$ Regarding $S$ as a subscheme of $E$ via this isomorphism, consider the commutative diagram
\begin{displaymath}
\xymatrix{ H_0(S,\Z)^0 \ar[d]_{inc_*}&  \times & \Hom_{\DM_{Nis}^{eff}(k)}(M(S),\mathbb M)  & \buildrel P_S\over\longrightarrow & \Gamma(\mathbb M) \ar@{=}[d] \\
H_0(E,\Z)^0 & \times & \Hom_{\DM_{Nis}^{eff}(k)}(M(E),\mathbb M) \ar@{->>}[u]^-{inc^*} &\buildrel P_E\over\longrightarrow & \Gamma(\mathbb M)}
\end{displaymath}
The map $inc^*$ is surjective because it can be identified with the pullback of Chow groups $CH^r(E\times X)\longrightarrow CH^r(S\times X)$ 
along the open immersion $S\times X\hookrightarrow E\times X.$ Thus, the commutativity of the diagram implies 
$\mathrm{im}(P_S)\subset\mathrm{im}(P_{E}).$ Finally, by the $\A^1$-homotopy invariance, $p$ induces an isomorphism 
of motives $M(E)\buildrel\cong\over\longrightarrow M(S').$ Hence, we obtain  $\mathrm{im}(P_S)\subset\mathrm{im}(P_{E})=\mathrm{im}(P_{S'}).$
\end{rem}

\begin{thm}\label{thm: existence main}
Suppose $k$ is algebraically closed. Let $X\in \Sch/k$ and let $n$ be a non-negative integer and $m$ be an integer. If $m\leq n+2,$ then there exists a universal regular homomorphism for $\Gamma_{alg}(\underline{\Hom}(M^c(X),\Z(n)[m])).$
\end{thm}

\begin{proof}
Let $\Z/l(n)$ denote the Suslin-Voevodsky motivic complex $\Z(n)$ tensored with $\Z/l.$ 
By Corollary~\ref{cor: practical criterion}, it suffices to show that $_l\Gamma(\underline{\Hom}(M^c(X),\Z(n)[m]))$ is finite for some prime $l$ that is different from $\mathrm{char}~k.$ Since this group received a surjection from $\Gamma(\underline{\Hom}(M^c(X),\Z/l(n)[m-1])),$ it is enough to show:

\skipline
\emph{Claim. $\Gamma(\underline{\Hom}(M^c(X),\Z/l(n)[m]))$ is finite for any $X\in\Sch/k$ if $m\leq n+1$ and $l\neq \mathrm{char}~k.$}
\skipline

Let us first prove this for smooth proper schemes $X.$ In this case, we have $M^c(X)=M(X),$ so there is an isomorphism 
\[\Gamma(\underline{\Hom}(M^c(X),\Z/l(n)[m]))\cong H_{Nis}^{m}(X,\Z/l(n))\]
by \cite[Corollary 2.19]{MVW} and \cite[Example 11.1.3]{Cisinski-Deglise}.

By the Beilinson-Lichtenbaum-Rost-Voevodsky theorem (see \cite[Introduction, Theorem C]{HW-norm-residue}), the canonical map
\[H_{Nis}^{m}(X,\Z/l(n))\longrightarrow H_{\acute et}^{m}(X,\mu_l^{\otimes n})\]
is an isomorphism if $m < n+1$ and still an injection for $m = n+1.$ The claim therefore follows from the finiteness of the \'etale cohomology $H_{\acute et}^m(X,\mu_l^{\otimes n})$ (see \cite[Chapter VI, Corollary 2.8]{Milne etale}).

Next, we prove the general case by the induction on dimension. Let $p$ be the exponential characteristic of $k.$ Since $l$ is prime to $p,$ we have
\[\Gamma(\underline{\Hom}(M^c(X),\Z/l(n)[m]))\cong \Hom_{\DM_{Nis}^{eff}(k,\Z[\frac{1}{p}])}(\Z[\frac{1}{p}], \underline{\Hom}(M^c(X)[\frac{1}{p}],\Z/l(n)[m])),\]
where $\DM_{Nis}^{eff}(k,\Z[\frac{1}{p}])$ is the derived category of effective motives with $\Z[\frac{1}{p}]$-coefficients in \cite[Definition 11.1.1]{Cisinski-Deglise} and \cite[Definition 4.0.5]{Kelly}.

For an arbitrary $X\in \Sch/k,$ there is a pullback diagram by \cite[Expos\'e 0, Theorem 3 (1)]{ILO}
\begin{displaymath}
\xymatrix{U' \ar[d]_-{f'} \ar@{^{(}->}[rr] & & \bar X' \ar[d]^-f \\
U \ar@{^{(}->}[r] & X \ar@{^{(}->}^-i[r] & \bar X}
\end{displaymath} 
where $i$ is a Nagata compactification, $f$ is a Gabber's $l'$-alteration with $\bar X'$ smooth and projective, and $U$ is a smooth open dense subscheme of $X$ such that $f'$ is \'etale and finite of degree prime to $l.$ 

By the localization triangle in $\DM_{Nis}^{eff}(k,\Z[\frac{1}{p}])$ (\cite[Proposition 5.3.5]{Kelly}; cf. \cite[Proposition 4.1.5]{VSF5} under resolution of singularities with integral coefficients)
\[M^c(\bar X'\setminus U')[\frac{1}{p}]\longrightarrow M^c(\bar X')[\frac{1}{p}]\longrightarrow M^c(U')[\frac{1}{p}]\buildrel [+1]\over \longrightarrow,\]
the claim for $U'$ follows from the smooth proper case of $\bar X'$ and the induction hypothesis. 

Now, since $f'$ is finite and \'etale, 
the composition 
\[M^c(U)[\frac{1}{p}]\buildrel f'^*\over\longrightarrow M^c(U')[\frac{1}{p}]\buildrel f'_*\over\longrightarrow M^c(U)[\frac{1}{p}]\]
is the multiplication by the degree of $f',$ where $f'^*$ (resp. $f'_*$) is defined by the flat pullback (resp. proper pushforward) of cycles. As the degree of $f'$ is prime to $l,$ the composition of the induced maps
\[\underline{\Hom}(M^c(U)[\frac{1}{p}],\Z/l(n)[m])\longrightarrow \underline{\Hom}(M^c(U')[\frac{1}{p}],\Z/l(n)[m])\]
\[\longrightarrow \underline{\Hom}(M^c(U)[\frac{1}{p}],\Z/l(n)[m])\]
is an isomorphism. Hence, the claim for $U$ follows from that for $U'.$

Finally, consider the localization triangle associated with $U\hookrightarrow X.$ Since $\dim X\setminus U <\dim X,$ the claim for $X$ follows from the induction hypothesis and the claim for $U.$
\end{proof}


\subsection{Rationality}\label{subsection: rationality}

Let $k$ be a perfect field and let $\bar k$ be an algebraic closure. In this subsection, we assume $X$ to be a smooth proper $k$-scheme and write $X_{\bar k}:=X\times_k\bar k.$ Since $X$ is proper, we have $M^c(X_{\bar k})=M(X_{\bar k}).$ We are going to consider the rationality of the universal regular homomorphism for $\Gamma_{alg}(\underline{\Hom}(M(X_{\bar k}),\Z(n)[m])).$ The assumption that $X$ is smooth and proper is made because we will need to interpret the group $\Hom_{\DM_{Nis}^{eff}(k)}(M(T),\underline{\Hom}(M(X_{\bar k}),\Z(n)[m]))$ ($T\in\Sm/\bar k$) geometrically as the higher Chow group $CH^n(T\times X_{\bar k}, 2n-m).$

We write $\tilde\sigma$ for the scheme morphism $\Spec \sigma\colon\Spec\bar k\longrightarrow \Spec\bar k$ induced by an automorphism $\sigma\in \Gal(\bar k/k).$ For $\bar k$-schemes $X_1$ and $X_2,$ a scheme morphism $f\colon X_1\longrightarrow X_2$ is said to be {\bf over $\tilde\sigma$} if the diagram
\begin{displaymath}
\xymatrix{ X_1 \ar[r]^-f \ar[d]  & X_2 \ar[d]\\
\Spec \bar k \ar[r]^-{\tilde\sigma} & \Spec \bar k}
\end{displaymath}
commutes. Here the vertical arrows are the structure morphisms. For a morphism $f$ over $\tilde\sigma,$ we define a map $f(\bar k/\tilde\sigma)\colon X_1(\bar k)\longrightarrow X_2(\bar k)$ by $f(\bar k/\tilde\sigma)(x)=f\circ x\circ\tilde\sigma^{-1}.$

The proof of the following theorem is a direct translation of the proof of \cite[Theorem 4.4]{ACMV} into our setting. We thank the referee for suggesting the second statement in the theorem. 

\begin{thm}\label{thm: rationality}
Suppose $\Gamma_{alg}(\underline{\Hom}(M(X_{\bar k}),\Z(n)[m]))$ has a universal regular homomorphism (for example, in the situation of Theorem~\ref{thm: existence main})
\[\Phi\colon \Gamma_{alg}(\underline{\Hom}(M(X_{\bar k}),\Z(n)[m]))\longrightarrow S(\bar k).\]
Then, the semi-abelian $\bar k$-variety $S$ has a model over $k,$ i.e. there is a semi-abelian $k$-variety $\underline S$ with $S\cong\underline S\times_k\bar k.$

In addition, $\Phi$ also descends in the following sense: the composition
\[\Gamma_{alg}(\underline{\Hom}(M(X),\Z(n)[m]))\longrightarrow\Gamma_{alg}(\underline{\Hom}(M(X_{\bar k}),\Z(n)[m]))\buildrel\Phi\over\longrightarrow S(\bar k),\]
where the first arrow is the canonical map, factors through $\underline S(k)\to \underline S(\bar k)\cong S(\bar k),$ the map defined by the choice of a model $\underline S.$
\end{thm}

\begin{proof}
Let us first prove the existence of $\underline S.$
As explained in \cite[Section 4.1 and Remark 4.1]{ACMV} together with \cite[Lemma 4.1]{Kahn} (or more classically \cite[Theorem 3]{Rosenlicht} or \cite[Theorem 2]{Iitaka}), it suffices to construct a $\bar k/k$-descent datum on $S$ that 
fixes $0$ of $S,$ i.e. a system of scheme isomorphisms $h_{\tilde\sigma}\colon S\buildrel\cong\over\longrightarrow S$ over $\tilde\sigma$ ($\sigma\in\Gal(\bar k/k)$) such that $h_{\tilde\sigma}(0)=0$ and $h_{\tilde\sigma\circ\tilde\tau}=h_{\tilde\sigma}\circ h_{\tilde\tau}$ for any $\sigma,\tau\in \Gal(\bar k/k).$

By Lemma~\ref{lem: pointed vs unpointed over closed fields}, there is an universal unpointed regular homomorphism
\[\Psi\colon \Gamma(\underline{\Hom}(M(X_{\bar k}),\Z(n)[m]))\longrightarrow G(\bar k).\]
Observe that it suffices to construct a system of scheme isomorphisms $k_{\tilde\sigma}\colon G\buildrel\cong\over\longrightarrow G$ over $\tilde\sigma$ with $k_{\tilde\sigma}(0)=0$ and $k_{\tilde\sigma\circ\tilde\tau}=k_{\tilde\sigma}\circ k_{\tilde\tau}$ ($\sigma, \tau\in \Gal(\bar k/k)$) because $k_{\tilde\sigma}$'s restrict to the identity component $G^0$ of $G$ (which is isomorphic to $S$) and give rise to a desired descent datum $\{h_{\tilde\sigma}\}$ on $S.$

Via the comparison isomorphism in \cite[Theorem 19.1]{MVW}, $\Psi$ can be regarded as an initial homomorphism 
\[\psi\colon CH^n(X_{\bar k}, 2n-m)\longrightarrow G(\bar k)\]
among homomorphisms $\psi'\colon CH^n(X_{\bar k}, 2n-m)\longrightarrow G'(\bar k)$ ($G'$ is an arbitrary semi-abelian $\bar k$-scheme) such that for any $T\in\Sm/\bar k$ and $Y\in CH^n(T\times X_{\bar k}, 2n-m),$ the composition 
\[T(\bar k) \buildrel Y_*\over\longrightarrow CH^n(X_{\bar k}, 2n-m)\buildrel\psi'\over\longrightarrow G'(\bar k)\]
is induced by a morphism of $\bar k$-schemes $T\longrightarrow G'.$ Here, $Y_*$ sends a $\bar k$-point $t\in T(\bar k)$ to the pullback of $Y$ along the morphism $t\times id\colon\Spec\bar k\times X_{\bar k}\longrightarrow T\times X_{\bar k}.$ (For the pullback of higher Chow groups of smooth schemes, see \cite[Theorem 17.21]{MVW}.) Let us call such a homomorphism $\psi'$ regular too because it is just an unpointed regular homomorphism formulated in terms of higher Chow groups.

Now, we come to the key step of the proof. For $\sigma\in\Gal(\bar k/k),$ consider the induced isomorphism $\sigma_{X_{\bar k}}:=id_X\times \tilde\sigma\colon X_{\bar k}\longrightarrow X_{\bar k},$ and let $\sigma_{X_{\bar k}}^*\colon CH^n(X_{\bar k},2n-m)\longrightarrow CH^n(X_{\bar k},2n-m)$ be the pullback. We claim that the composition 
\[T(\bar k)\buildrel Y_*\over\longrightarrow CH^n(X_{\bar k},2n-m)\buildrel \sigma_{X_{\bar k}}^*\over\longrightarrow CH^n(X_{\bar k},2n-m) \buildrel\psi\over\longrightarrow G(\bar k),\]
where $T,$ $Y$ and $\psi$ are as above, is induced by a morphism $f\colon T\longrightarrow G$ over $\tilde\sigma^{-1},$ i.e. $f(\bar k/\tilde\sigma^{-1}) = \psi\circ\sigma_{X_{\bar k}}^*\circ Y_*.$ (Note that we have $\tilde\sigma^{-1}=\widetilde{\sigma^{-1}}.$) Indeed, let $\sigma_T\colon T^\sigma\longrightarrow T$ be the base change of $\tilde\sigma$ along the structure morphism $T\longrightarrow \bar k.$ Consider the commutative diagram
\begin{displaymath}
\xymatrixcolsep{4pc}\xymatrix{ T(\bar k)  \ar[r]^-{Y_*} \ar[d]_-{\sigma_T^{-1}(\bar k/\tilde\sigma^{-1})} & CH^n(X_{\bar k}, 2n-m)  \ar[d]^-{\sigma_{X_{\bar k}}^*} \\
T^\sigma(\bar k)  \ar[r]_-{(\sigma_T\times_{\tilde\sigma}\sigma_{X_{\bar k}})^*(Y)_*} & CH^n(X_{\bar k}, 2n-m) \ar[r]^-{\psi} & G(\bar k).}
\end{displaymath}
Let us write $t^\sigma = \sigma_T^{-1}(\bar k/\tilde\sigma^{-1})(t)$ for $t\in T(\bar k).$ The commutativity of the above diagram is a result of the commutativity of the following square 
\begin{displaymath}
\xymatrix{ X_{\bar k} \ar[r]^-{t^\sigma\times id} \ar[d]_-{\sigma_{X_{\bar k}}} & T^\sigma \times_{\bar k} X_{\bar k} \ar[d]^-{\sigma_T\times_{\tilde\sigma} \sigma_{X_{\bar k}}}\\
 X_{\bar k} \ar[r]_-{t\times id} & T \times_{\bar k} X_{\bar k}.}
\end{displaymath}
Now, since $\psi\circ (\sigma_T\times_{\tilde\sigma}\sigma_{X_{\bar k}})^*(Y)_*$ is induced by a $\bar k$-morphism $T^\sigma\longrightarrow G$ by the regularity of $\psi,$ the composition $\psi\circ\sigma_{X_{\bar k}}^*\circ Y_* = \psi\circ (\sigma_T\times_{\tilde\sigma}\sigma_{X_{\bar k}})^*(Y)_*\circ \sigma_T^{-1}(\bar k/\tilde\sigma^{-1})$ is induced by a morphism $T\longrightarrow G$ over $\tilde\sigma^{-1}.$

The claim we have just proved implies that the composition 
\[ CH^n(X_{\bar k},2n-m)\buildrel \sigma_{X_{\bar k}}^*\over\longrightarrow CH^n(X_{\bar k},2n-m) \buildrel\psi\over\longrightarrow G(\bar k) \buildrel (\sigma^{-1})_{G}^{-1}(\bar k/\tilde\sigma) \over\longrightarrow G^{\sigma^{-1}}(\bar k)\]
is regular. (The last map is by definition the map induced by the inverse of $(\sigma^{-1})_{G}\colon G^{\sigma^{-1}}\longrightarrow G.$) Now, by the universality of $\psi,$ there exists a unique $\bar k$-morphism $l_\sigma\colon G\longrightarrow G^{-\sigma}$ that fits in the diagram
\begin{displaymath}
\xymatrixcolsep{4pc}\xymatrix{CH^n(X_{\bar k}, 2n-m)  \ar[d]^-{\sigma_{X_{\bar k}}^*} \ar[r]^-{\psi} & G(\bar k)  \ar@{..>}[dr]^-{\exists!~l_\sigma(\bar k)}\\
CH^n(X_{\bar k}, 2n-m) \ar[r]^-{\psi} & G(\bar k) \ar[r]_-{(\sigma^{-1})_G^{-1}(\bar k/\tilde\sigma)} & G^{\sigma^{-1}}(\bar k).}
\end{displaymath}

Let us put $m_\sigma :=(\sigma^{-1})_G\circ l_\sigma\colon G\longrightarrow G.$ It is a morphism over $\tilde\sigma^{-1}.$ Since $\psi$ and $\sigma_{X_{\bar k}}^*$ are group homomorphisms, $m_\sigma$ sends $0$ to $0.$ The uniqueness of $l_\sigma$ implies $m_{\tau\sigma}=m_\tau\circ m_\sigma$ for any $\sigma,\tau\in\Gal(\bar k/k).$ In particular, $m_\sigma$ is invertible. Now, define $k_{\tilde\sigma} := (m_\sigma)^{-1}.$ It is clear that $\{k_{\tilde\sigma}\}$ is the desired system of isomorphisms on $G.$

Finally, for the descent property of $\Phi,$ it suffices to show that $\Gal(\bar k /k)$ acts trivially on the image of 
\[\Gamma_{alg}(\underline{\Hom}(M(X),\Z(n)[m]))\longrightarrow\Gamma_{alg}(\underline{\Hom}(M(X_{\bar k}),\Z(n)[m]))\buildrel\Phi\over\longrightarrow S(\bar k),\]
where $\Gal(\bar k /k)$ acts on $S(\bar k)$ on the right via $\{h_{\tilde\sigma}\}_{\sigma\in\Gal(\bar k/k)}.$
Since $h_{\tilde\sigma}$ is defined as the restriction of $k_{\tilde\sigma}$ to the identity component, it is enough to observe that $\Gal(\bar k/k)$ acts (via $k_{\tilde\sigma}$) trivially on the image of
\[CH^n(X, 2n-m) \longrightarrow CH^n(X_{\bar k}, 2n-m) \buildrel\psi\over\longrightarrow G(\bar k).\]
This indeed follows from the following diagram, which commutes by the definition of $k_{\tilde\sigma}$:
\begin{displaymath}
\xymatrixcolsep{4pc}\xymatrix{CH^n(X, 2n-m) \ar[r] \ar[d]^-{id_X^*} & CH^n(X_{\bar k}, 2n-m)  \ar[d]^-{\sigma_{X_{\bar k}}^*} \ar[r]^-{\psi} & G(\bar k) \ar[d]^-{k_{\tilde\sigma}(\bar k)^{-1}} \\
CH^n(X, 2n-m) \ar[r] & CH^n(X_{\bar k}, 2n-m) \ar[r]^-{\psi} & G(\bar k).}
\end{displaymath}
\end{proof}

\appendix

\section{Universal regular homomorphisms for \'etale motives}\label{app}

\hfill by Bruno Kahn
\bigskip

In this appendix, we show that universal regular homomorphisms exist for all geometric \'etale motives and compare them with the ones for Nisnevich motives when they exist.

Let $\sT$ be a triangulated category. We say that a set $\sX$ of objects \emph{generates} $\sT$ if the smallest triangulated subcategory of $\sT$ containing $\sX$ and stable under direct summands is equal to $\sT$. We write $p$ for the exponential characteristic of $k$.

\begin{lemma}\label{la1} Let $DM_{\gm,\et}^\eff(k)$ be the category of {\rm [B-VK16, Def. 2.1.1]} (a $\Z[1/p]$-linear triangulated category), and let $DM_{\gm,\et}(k)$ be the category obtained from $DM_{\gm,\et}^\eff(k)$ by $\otimes$-inverting the Tate object. Then\\
a) the functor $DM_{\gm,\et}^\eff(k)\to DM_{\gm,\et}(k)$ is fully faithful;\\
b) $DM_{\gm,\et}(k)$ is generated by the $M(X)(-n)$ for $X$ smooth projective and $n\in \N$, and this $\otimes$-category is rigid.
\end{lemma}

\begin{proof} For a), use \cite[Prop. A.3]{motiftate}. By Gabber's refinement of de Jong's theorem \cite[Exp. X, th. 2.1]{gabber}, $DM_{\gm,\et}^\eff(k)$ is generated by the $M(X)$ for $X$ smooth projective; the first claim of b) follows. Thus does the second one, since each $M(X)$ is strongly dualisable \cite[App. B, ii)]{motiftate}. 
\end{proof}

For $A\in \sT$ and $n>0$, we write $A/n$ for the cone of $A\by{n} A$. This is well-defined up to non-unique isomorphism. For simplicity, assume that $\sT$ has a tensor structure with unit object $\Z$; we may choose a $\Z/n$ once and for all and define $A/n:=A\otimes \Z/n$ for any $A$, making $A\mapsto A/n$ a triangulated functor. 

\begin{thm}\label{ta1} Let $k$ be algebraically closed, and let $l$ be a prime number different from $p$. Then, for any $A\in DM_{\gm,\et}(k)$,\\
a) $\Gamma(A/l)$ is finite;\\
b)  ${}_l\Gamma(A)$ is finite, hence also ${}_l\Gamma_\alg(A)$;\\
c) a universal regular homomorphism exists for $\Gamma_\alg(A)$.
\end{thm}

\begin{proof} For simplicity, write $\sT:=DM_{\gm,\et}(k)$. Let $\sA$ be the full subcategory of $\sT$ consisting of those $A$'s such that $A(n)[i]$ verifies a) for any $(n,i)\in \Z\times \Z$: we have to show that $\sA=\sT$.

Let $A'\to A\to A''\by{+1}$ be an exact triangle in $\sT$. If $\Gamma(A'/l)$ and $\Gamma(A''/l)$ are finite, so is $\Gamma(A/l)$. Since $\sA$ is stable under translation, it is a triangulated subcategory of $\sT$. Since it is also clearly stable under direct summands, it suffices by Lemma \ref{la1} b) to show that $M(X)(-n)\in \sA$ for any smooth projective $X$ and any $n\in \N$. 

If $X$ is smooth projective of dimension $d$, Poincar\'e duality gives an isomorphism in $DM_\gm(k)$, hence also in $DM_{\gm,\et}(k)$:
\[M(X)\simeq M(X)^*(d)[2d]\]
where $M(X)^*$ is the dual of $M(X)$. Then, for $(n,i)\in \N\times \Z$, we get
\begin{multline*}
\Gamma(M(X)(n)[i]/l)\simeq \sT(M(X),\Z/l(d+n)[2d+i])\\
\simeq \sT(M(X)(r)[2r],\Z/l(d+r+n)[2(d+r)+i])
\end{multline*}
for any $r\in\Z$. Choose $r\ge 0$ such that $n+r\ge 0$. Up to replacing $X$ by $X\times \P^r$, we may assume $r=0$. By Lemma \ref{la1} a), the right hand side is then isomorphic to
\[H^{2d+i}_{M,\et}(X,\Z/l(d+n)) \text{ (\'etale motivic cohomology)}.\]

But this is ordinary \'etale cohomology by [SV00], so this group is (classically) finite.

b) follows from a) by the exact sequence
\[\Gamma(A/l[-1])\to \Gamma(A)\by{l} \Gamma(A).\]

Finally,  b) $\Rightarrow$ c) follows from Corollary 2.20.
\end{proof}

We have the following lemmas concerning the general situation of \S 2.2, where $\sA$ is just a preadditive category. 

\begin{lemma}\label{la3} Assume that $\Gamma_\alg(M(C))$ is divisible for any smooth curve $C$. Let $f:A\to B$ be a morphism in $\sA$.  Suppose that $\Coker(\sA(M(C),A)\allowbreak\by{f_*} \sA(M(C),B))$ is torsion for any $C$. Then $\Gamma_\alg(A)\to \Gamma_\alg(B)$ is surjective. If $A$ admits a universal regular homomorphism, so does $B$, and the corresponding homomorphism of universal semi-abelian varieties is faithfully flat.
\end{lemma}

\begin{proof} That of the first statement is a variant of the argument in the proof of Proposition 3.1, using the fact that the tensor product of a divisible group and a torsion group is $0$. Any regular homomorphism for $B$ induces one for $A$, hence the second statement follows from Corollary 2.19 and Proposition 2.15.
\end{proof} 

\begin{rem} One may wonder about a general version of Ro\v\i tman's theorem, i.e. ask when the universal map from Proposition 2.15 is bijective on $l$-torsion. At any rate, Lemma \ref{la3} shows that this is false most of the time: if $A=0$ in this lemma, one gets that the universal regular homomorphism for $B$ is $0$. In $DM_\gm(k)$ or $DM_{\gm,\et}(k)$, one may take  $B=C/n$ for any $C$ and any $n>0$ as an example. Finding good conditions for such a ``Ro\v\i tman theorem'' (beyond the known cases) looks like a hard geometric question.
\end{rem}

\begin{lemma}\label{la4} Let $T:\sA\to \sB$ be an additive functor to another preadditive category $\sB$. Assume that, with obvious notation, $\Gamma_\alg^\sA(M(C))$ and $\Gamma_\alg^\sB(TM(C))$ are divisible for any smooth curve $C$. Let $A\in \sA$. Suppose that $\Coker(\sA(M(C),A)\by{T} \sB(TM(C),T(A)))$ is torsion for any $C$. Then $T:\Gamma_\alg^\sA(A)\to \Gamma_\alg^\sB(T(A))$ is surjective. If $A$ admits a universal regular homomorphism, so does $T(A)$, and the corresponding homomorphism of universal semi-abelian varieties is faithfully flat.
\end{lemma}

The proof is completely parallel to the one of Lemma \ref{la3}.\qed

\begin{example} $\sA=DM_\gm(k)$, $\sB=DM_{\gm,\et}(k)$, $T=\alpha^s$, the change of topology functor. The hypotheses of Lemma \ref{la4} are satisfied: for the first one see Proposition 3.1 and its proof for $\sA$, and the computation of $\Gamma_\alg^\sB(TM(C))=DM_{\gm,\et}(k)(\Z,M_\et(C))_\alg$ is the same after tensoring with $\Z[1/p]$. The second hypothesis is true by [Voe00, Prop. 3.3.2]. Thus we get:
\end{example}

\begin{prop}\label{pa1} Let $A\in DM_\gm(k)$. Suppose that a universal regular homomorphism exists for $A$, with target $G$. Let $G_\et$ be the target of the universal regular homomorphism for $\alpha^s(A)$, given by Theorem \ref{ta1} c). Then the natural morphism $G\to G_\et$ is faithfully flat.\qed
\end{prop}



\begin{thebibliography}{99}

\bibitem[ACMV17a]{ACMV} Achter, J. D., Casalaina-Martin, S., Vial, C., On descending cohomology geometrically. Compositio Math. 153 (2017), 1446--1478.

\bibitem[ACMV17b]{ACMVparameter} Achter, J. D., Casalaina-Martin, S., Vial, C., Parameter spaces for algebraic equivalence. International Mathematics Research Notices, Vol. 2017 (2017), 1--31. 

\bibitem[ABV09]{Ayoub-Barbieri-Viale} Ayoub, J., Barbieri-Viale, L., $1$-motivic sheaves and the Albanese functor. Journal of Pure and Applied Algebra 213 (2009), 809--839.

\bibitem[BVK16]{Barbieri-Viale-Kahn} Barbieri-Viale, L., Kahn, B., On the derived category of $1$-motives, Ast\'erisque 381 (2016).

\bibitem[Blo79]{Bloch} Bloch, S., Torsion algebraic cycles and a theorem of Roitman. Compositio Math. 39 (1979), no. 1, 107--127.

\bibitem[Blo81]{Bloch 2} Bloch, S., Torsion algebraic cycles, $K_2,$ and Brauer groups of function fields. Lecture Notes in Math., 844, Springer, Berlin, 1981.

\bibitem[BO74]{Bloch-Ogus} Bloch, S., Ogus, A., Gersten's conjecture and the homology of schemes. Ann. Sci. \'Ecole Norm. Sup. (4) 7 (1974), 181--201.

\bibitem[CP16]{Charles-Poonen} Charles, F., Poonen, B., Bertini irreducibility theorems over finite fields. J. Amer. Math. Soc. 29 (2016), no. 1, 81--94.

\bibitem[Cho57]{Chow} Chow, W.-L., On the projective embedding of homogeneous varieties, in: {\em Algebraic geometry and topology. A symposium in honor of S. Lefschetz,} Princeton Univ. Press, Princeton, NJ, 1957, 122--128.

\bibitem[CD19]{Cisinski-Deglise} Cisinski, D.-C., D\'eglise, F., Triangulated categories of mixed motives. Springer Monographs in Mathematics, Springer, 2019.

\bibitem[Con02]{Conrad} Conrad, B., A modern proof of Chevalley's theorem on algebraic groups. J. Ramanujan Math. Soc., 17 (2002), 1--18.

\bibitem[dJ96]{de Jong} de Jong, A. J., Smoothness, semi-simplicity and alterations. Inst. Hautes \'Etudes Sci. Publ. Math. 83 (1996), 51--93.

\bibitem[FV00]{Friedlander-Voevodsky} Friedlander, E., Voevodsky, V., Bivariant cycle cohomology, in: {\em Cycles, transfers, and motivic homology theories,} Ann. of Math. Stud. 143, Princeton Univ. Press, Princeton, NJ, 2000, 138--187.

\bibitem[Ful98]{Fulton} Fulton, W., {\em Intersection theory}, 2nd edition, Springer-Verlag New York, New York, 1998. 

\bibitem[HW09]{Haesemeyer-Weibel} Haesemeyer, C., Weibel, C., Norm varieties and the chain lemma (after Markus Rost). {\em Abel Symposium,} 4, 95--130, Springer, Berlin, 2009.

\bibitem[Har75]{Hartshorne} Hartshorne, R., Equivalence relations on algebraic cycles and subvarieties of small codimension. Proceedings of Symposia in Pure Mathematics 29 (1975), 129--164.

\bibitem[HW19]{HW-norm-residue} Haesemeyer, C., Weibel, C., {\em The norm residue theorem in motivic cohomology.} Annals of Mathematics Studies 200, Princeton University Press, Princeton, NJ, 2019.

\bibitem[HK06]{motiftate} Huber, A., Kahn, B., The slice filtration and mixed Tate motives. Compositio Math. 142 (2006), 907--936.

\bibitem[Iit76]{Iitaka} Iitaka, S., Logarithmic forms of algebraic varieties. J. Fac. Sci., Univ. Tokyo, Sect. I A 23 (1976), 525--544.

\bibitem[ILO]{ILO14} Illusie, L., Laszlo, Y., Orgogozo, F., Travaux de Gabber sur l'uniformisation locale et la cohomologie \'etale des sch\'emas quasi-excellents, Ast\'erisque 363 (2014).

\bibitem[IT14]{gabber} Illusie, L., Temkin, M., Travaux de Gabber sur l’uniformisation locale et la cohomologie \'etale des sch\'emas quasi-excellents, L. Illusie et al., eds., Ast\'erisque 363--364, Soci\'et\'e Math\'ematique de France, Paris, 2014.

\bibitem[Jou83]{Jouanolou} Jouanolou, J.-P., Th\'eor\`emes Bertini et Applications. Progress in Mathematics, 42. Birkh\"auser Boston, Inc., Boston, 1983.

\bibitem[Kah14]{Kahn} Kahn, B., Algebraic tori and Nisnevich sheaves with transfers. Ann. Fac. Sci. Toulouse Math. (6) 23 (2014), no. 3, 699--715. 

\bibitem[Kah21]{Kahn-patch} Kahn, B., On the universal regular homomorphism in codimension $2.$ Ann. Inst. Fourier 71 (2021), no. 2, 843--848.

\bibitem[Kap18]{Kaplansky} Kaplansky, I., {\em Infinite abelian groups}, Reprint of the 1969 revised edition published by University of Michigan Press. Dover Publications, New York, 2018.
 
\bibitem[Kel17]{Kelly} Kelly, S., Voevodsky motives and $l$dh descent, Ast\'erisque 391 (2017).

\bibitem[Kle71]{KleimanICM} Kleiman, S. L., Finiteness theorems for algebraic cycles. Actes du Congr\`es International des Math\'ematiciens (Nice, 1970), Tome 1, 445--449. Gauthier-Villars, Paris, 1971.

\bibitem[Kle05]{Kleiman} Kleiman, S. L., The Picard scheme, in: {\em Fundamental algebraic geometry. Grothendieck's FGA explained,} Mathematical Surveys and Monographs 123, Amer. Math. Soc. 2005, 235--321. 

\bibitem[Lan59]{Lang} Lang, S., {\em Abelian varieties}, Interscience Tracts in Pure and Applied Mathematics. 7, Interscience Publishers, INC., New York, 1959.

\bibitem[Lic93]{Lichtenbaum} Lichtenbaum, S., Suslin Homology and Deligne $1$-motives, in: {\em Algebraic $K$-theory and algebraic topology} (Lake Louise, AB, 1991), NATO Adv. Sci. Inst. Ser. C Math. Phys. Sci., 407, Kluwer Academic Publishers, Dordrecht, 1993, 189--196.

\bibitem[Lie72]{LiebermanMotive} Lieberman, D., Intermediate Jacobians, in: {\em Algebraic geometry, Oslo 1970 (Proc. Fifth Nordic Summer-School in Math.),} Wolters-Noordhoff, Groningen, 1972, 125--139.

\bibitem[MVW06]{MVW} Mazza,~C., Voevodsky,~V., Weibel,~C., {\em Lecture notes on motivic cohomology}, Clay Mathematics Monographs. 2, American Mathematical Society, Providence, RI, 2006.

\bibitem[MS83]{Merkurjev-Suslin} Merkurjev, A., Suslin, A., $K$-cohomology of Severi-Brauer varieties and norm residue homomorphisms. Izvestiya Akad. Nauk. SSSR 46 (1982), English translation: Math USSR Izvestiya 21 (1983), 307--340.

\bibitem[Mil80]{Milne etale} Milne, J. S., {\em \'Etale cohomology}, Princeton Mathematical Series. 33, Princeton University Press, Princeton, New Jersey, 1980.

\bibitem[Mil82]{Milne} Milne, J. S., Zero cycles on algebraic varieties in nonzezro characteristic: Rojtman's theorem. Compositio Math. 47 (1982), no. 3, 271--287.

\bibitem[Mil86]{MilneJV} Milne, J. S., Jacobian varieties, in: {\em Arithmetic geometry (Storrs, Conn.,1984),} Springer, New York, 1986, 167--212.

\bibitem[Mum74]{Mumford} Mumford, D., {\em Abelian varieties}, Tata Institute of Fundamental Research Studies in Mathematics. 5, Oxford University Press, London, 1974.

\bibitem[Mur85]{Murre} Murre, J. P., Application of algebraic $K$-theory to the theory of algebraic cycles, in: {\em Proc. Algebraic Geometry Sitges (Barcelona) 1983,} Lecture Notes in Math. 1124, Springer-Verlag, Berlin 1985, 216--261.

\bibitem[Poo04]{Poonen} Poonen, B., Bertini theorems over finite fields. Ann. of Math. (2) 160 (2004), no. 3, 1099--1127.

\bibitem[Ram98]{Ramachandran} Ramachandran, N., Duality of Albanese and Picard $1$-motives. $K$-Theory 22 (2001), no. 3, 271--301.

\bibitem[Roj80]{Rojtman} Rojtman, A. A., The torsion of the group of 0-cycles modulo rational equivalence. Ann. of Math. (2) 111 (1980), no. 3, 553--569.

\bibitem[Ros61]{Rosenlicht} Rosenlicht, M., Toroidal algebraic groups. Proc. Amer. Math. Soc. 12 (1961), 984--988.

\bibitem[Sam58]{Samuel} Samuel, P., Relations d'equivalence en g\'eom\'etrie alg\'ebrique. Proc. Internat. Congress Math. 1958, 470--487.

\bibitem[Sai79]{Saito} Saito, H., Abelian varieties attached to cycles of intermediate dimension. Nagoya Math. J. 75 (1979), 95--119.

\bibitem[Ser59]{Serre} Serre, J.-P., Morphismes universels et vari\'et\'es d'Albanese. S\'eminaire Claude Chevalley, tome 4 (1958-1959), exp. no. 10.

\bibitem[SGA4]{SGA4} {\em Th\'eorie des topos et cohomologie \'etale des sch\'emas, tome 3,} Lecture Notes in Mathematics 305, Springer-Verlag Berlin, Heidelberg, 1973.

\bibitem[SS03]{Spiess-Szamuely} Spiess, M., Szamuely, T., On the Albanese map for smooth quasi-projective varieties. Math. Ann. 325 (2003), no. 1, 1--17.

\bibitem[SV96]{Suslin-Voevodsky-Inventiones} Suslin, A., Voevodsky, V., Singular homology of abstract algebraic varieties. Invent. Math. 123 (1996), 61--94.

\bibitem[SV00]{VSF2} Suslin, A., Voevodsky, V., Relative cycles and Chow sheaves, in: {\em Cycles, transfers, and motivic homology theories,} Ann. of Math. Stud. 143, Princeton Univ. Press, Princeton, NJ, 2000, 10--86.

\bibitem[Voe00]{VSF5} Voevodsky, V., Triangulated categories of motives over a field, in: {\em Cycles, transfers, and motivic homology theories,} Ann. of Math. Stud. 143, Princeton Univ. Press, Princeton, NJ, 2000, 188--238.

\bibitem[Voe11]{Voevodsky mod l} Voevodsky, V., On motivic cohomology with $\Z/l$-coefficients. Ann. of Math. (2) 174 (2011), no. 1, 401--438.

\bibitem[Weib09]{Weibel} Weibel, C., The norm residue isomorphism theorem. Journal of Topology 2 (2009), 346--372.

\bibitem[Weil54]{Weil} Weil, A., Sur les criter\`es d'equivalence en g\'eometrie alg\'ebrique. Mathematische Annalen 128 (1954), 95--127.

\bibitem[Wit08]{Wittenberg} Wittenberg, O., On Albanese torsors and the elementary obstruction. Math. Ann. 340, (2008), no. 4, 805--838.

\end{thebibliography}
\end{document}